\newcommand{\ds}{\displaystyle}
\newcommand{\nn}{\nonumber}
\newcommand{\ra}{\rightarrow}
\newcommand{\Ra}{\Rightarrow}
\newcommand{\Lra}{\Leftrightarrow}
\newcommand{\leftright}[3]{\left#1#2\right#3}
\newcommand{\BrS}[1]{\leftright{(}{#1}{)}}
\newcommand{\BrM}[1]{\left\{#1\right\}}
\newcommand{\BrL}[1]{\left[#1\right]}
\newcommand{\BrA}[1]{\left|#1\right|}
\newcommand{\BrSNd}[1]{(#1)}
\newcommand{\BrANd}[1]{|#1|}
\newcommand{\Diag}{\textrm{diag}}
\newcommand{\gausssymbol}[1]{\lfloor #1 \rfloor}
\newcommand{\Dim}{d}
\newcommand{\midd}{\;\middle|\;}
\newcommand{\Set}[2]{\left\{#1\midd#2\right\}}
\newcommand{\SetNd}[2]{\{#1\mid#2\}}
\newcommand{\Meas}[1]{\BrA{#1}} 
\newcommand{\Norm}[2]{\left\|#1\right\|_{#2}} 
\newcommand{\SNorm}[2]{\left|#1\right|_{#2}} 
\newcommand{\Grad}{\nabla}
\newcommand{\Lap}{\Delta}
\newcommand{\Div}{\nabla\cdot}
\newcommand{\Dm}{\Omega}
\newcommand{\Bd}{\Gamma}
\newcommand{\TimeMax}{T}
\newcommand{\N}{N}
\newcommand{\PtSetCher}{\mathcal{X}}
\newcommand{\PtSet}{\PtSetCher_{\N}}
\newcommand{\PtSetT}[1]{\PtSetCher_{\N}^{\TimeIndex{#1}}}
\newcommand{\IndexSetCher}{\Lambda}
\newcommand{\IndexSetFlT}[1]{\IndexSetCher^{\TimeIndex{#1}}_{\rm F}}
\newcommand{\IndexSetSfT}[1]{\IndexSetCher^{\TimeIndex{#1}}_{\rm S}}
\newcommand{\IndexSetWlT}[1]{\IndexSetCher^{\TimeIndex{#1}}_{\rm W}}
\newcommand{\IndexSetAll}{\IndexSetCher_{\N}}
\newcommand{\NFl}{\N_{\rm F}}
\newcommand{\NFlT}[1]{\NFl^{\TimeIndex{#1}}}
\newcommand{\NSf}{\N_{\rm S}}
\newcommand{\NSfT}[1]{\NSf^{\TimeIndex{#1}}}
\renewcommand{\i}{i}
\renewcommand{\j}{j}
\renewcommand{\k}{k}
\renewcommand{\l}{l}
\newcommand{\PtChar}{x}
\newcommand{\Pt}[1]{\PtChar_{#1}}
\newcommand{\PtT}[2]{\PtChar_{#1}^{\TimeIndex{#2}}}
\newcommand{\h}{h}
\newcommand{\WeightCher}{w}
\newcommand{\Weight}{\WeightCher}
\newcommand{\WeightDerive}{\dot{\Weight}}
\newcommand{\WeightH}{\Weight_{\h}}
\newcommand{\WeightHDerive}{\dot{\WeightCher}_{\h}}
\newcommand{\whd}{\WeightHDerive}
\newcommand{\w}{\Weight}
\newcommand{\wh}{\WeightH}
\newcommand{\FsSolCher}{V}
\newcommand{\FsSolAppT}[1]{\FsSolCher_{\N}^{#1}}
\newcommand{\FuncCher}{v}
\newcommand{\f}{\FuncCher}
\newcommand{\PvChar}{\omega}
\newcommand{\Pv}[1]{\PvChar_{#1}}
\newcommand{\Pvi}{\Pv{\i}}
\newcommand{\PvSetCher}{\mathcal{V}}
\newcommand{\PvSet}{\PvSetCher_{\N}}
\newcommand{\PvSetT}[1]{\PvSet^{\TimeIndex{#1}}}
\newcommand{\PvT}[2]{\Pv{#1}^{\TimeIndex{#2}}}
\newcommand{\FsWeightFunc}{\mathcal{W}}
\newcommand{\gconst}{c}
\newcommand{\PvTempCher}{\widetilde{\PvChar}}
\newcommand{\PvTemp}[1]{\PvTempCher_{\i}}
\newcommand{\FsCChar}{C}
\newcommand{\FsC}[2]{\FsCChar^{#1}(#2)}
\newcommand{\TStep}{k}
\newcommand{\TStepMax}{K}
\newcommand{\TimeIndex}[1]{#1}
\newcommand{\TimeChar}{t}
\newcommand{\tk}{\TimeChar^{\TimeIndex{\TStep}}}
\newcommand{\tkp}{\TimeChar^{\TimeIndex{\TStep+1}}}
\newcommand{\Dtcher}{\tau}
\newcommand{\Dt}{\Dtcher}
\newcommand{\DtT}[1]{\Dt^{\TimeIndex{#1}}}
\newcommand{\MassSPH}[1]{m_{#1}}
\newcommand{\DensSPH}[1]{\rho_{#1}}
\newcommand{\tmax}{T}
\newcommand{\Vel}{u}
\newcommand{\VelIni}{\Vel_0}
\newcommand{\Pres}{p}
\newcommand{\Dens}{\rho}
\newcommand{\force}{f}
\newcommand{\KVisc}{\nu}
\newcommand{\mderivD}{{\rm D}}
\newcommand{\mderiv}[3]{
	\ifnum #1=1	\frac{\mderivD#3}{\mderivD{#2}}%
	\else{\frac{\mderivD^{#1}#3}{\mderivD^{#1}{#2}} %
	}\fi%
}
\newcommand{\BdSf}{\Bd_{\rm S}}
\newcommand{\BdWall}{\Bd_{\rm W}}
\newcommand{\VelAppChar}{\mathrm{\Vel}}
\newcommand{\VelApp}[2]{\VelAppChar_{#1}^{\TimeIndex{#2}}}
\newcommand{\VelPredChar}{\widetilde{\VelAppChar}}
\newcommand{\VelPred}[2]{\VelPredChar_{#1}^{\TimeIndex{#2}}}
\newcommand{\PresAppChar}{\mathrm{\Pres}}
\newcommand{\PresApp}[2]{\PresAppChar_{#1}^{\TimeIndex{#2}}}
\newcommand{\VelAppLap}[2]{\langle\Lap\VelAppChar\rangle_{#1}^{\TimeIndex{#2}}}
\newcommand{\VelPredLap}[2]{\langle\Lap\VelPredChar\rangle_{#1}^{\TimeIndex{#2}}}
\newcommand{\VelPredDiv}[2]{\langle\Div\VelPredChar\rangle_{#1}^{\TimeIndex{#2}}}
\newcommand{\PresAppGrad}[2]{\langle\Grad\PresAppChar\rangle_{#1}^{\TimeIndex{#2}}}
\newcommand{\PresAppLap}[2]{\langle\Lap\PresAppChar\rangle_{#1}^{\TimeIndex{#2}}}
\newcommand{\dN}{\mathbb{N}}
\newcommand{\dR}{\mathbb{R}}
\newcommand{\dRd}{\mathbb{R}^{\Dim}}
\newcommand{\xchar}{x}
\newcommand{\x}{\xchar}
\newcommand{\ychar}{y}
\newcommand{\y}{\ychar}
\newcommand{\dx}{d\x}
\newcommand{\dy}{d\y}
\def\deq{\mathrel{\mathop:}=}%
\newcommand{\DmFuncCher}{f}
\newcommand{\DmFunc}{\DmFuncCher}
\newcommand{\IndexSetSeq}[2]{{#1}_{#2}}
\newcommand{\IndexSetSeqLast}{\zeta}
\newcommand{\IndexSetSeqAst}[2]{\IndexSetSeq{#1}{#2}^{\ast}}
\newcommand{\IndexSetSeqLastAst}{\IndexSetSeqLast^{\ast}}
\newtheorem{theorem}{Theorem}
\newtheorem{definition}[theorem]{Definition}
\newtheorem{lemma}[theorem]{Lemma}
\newtheorem{remark}[theorem]{Remark}
\newtheorem{cor}[theorem]{Corollary}
\newcommand{\WCofSupp}{r_0}
\newcommand{\ConstSemiReg}{c_0}
\newcommand{\Matrix}{A}
\newcommand{\VecSol}{\mathrm{x}}
\newcommand{\VecSouce}{b}
\newcommand{\MatrixPoisson}{\widehat{\Matrix}}
\newcommand{\MatrixDiag}{D}
\newcommand{\VecSolPoisson}{\widehat{\VecSol}}
\newcommand{\VecSoucePoisson}{\widehat{\VecSouce}}
\newcommand{\MatrixComp}[2]{a_{#1#2}}
\newcommand{\CofPositiveDifinite}{\alpha}
\newcommand{\InnerProdISPH}[3]{\left(#1,#2\right)_{#3}}
\newcommand{\NormLISPH}[3]{\Norm{#1}{0,#3}}
\newcommand{\NormHzTISPH}[3]{\SNorm{#1}{1,#3,#2}}
\newcommand{\NormHzInvTISPH}[3]{\SNorm{#1}{-1,#3,#2}}
\renewcommand{\VelIni}{a}
\newcommand{\ConstTStepCond}{\delta}
\newcommand{\DivISPH}[3]{\langle\Div{#1}\rangle_{#2}^{\TimeIndex{#3}}}
\newcommand{\GradISPH}[3]{\langle\Grad{#1}\rangle_{#2}^{\TimeIndex{#3}}}
\newcommand{\LapISPH}[3]{\langle\Lap{#1}\rangle_{#2}^{\TimeIndex{#3}}}
\newcommand{\TStepCondWCoff}{\alpha}
\newcommand{\WCoff}[1]{\beta_\Dim^{\rm #1}}
\title[Unique solvability and stability analysis for ISPH method]{Unique solvability and stability analysis for incompressible smoothed particle hydrodynamics method }
\author[Y. Imoto]{Yusuke Imoto${}^\dagger$}
\address{${}^\dagger$Tohoku Forum for Creativity, Tohoku University, 2-1-1 Katahira, Aoba-ku, Sendai 980-8577 Japan}
\email{y-imoto@tohoku.ac.jp}
\keywords{Incompressible smoothed particle hydrodynamics method, Incompressible Navier--Stokes equations, Unique solvability, Stability}
\begin{document}
	
\begin{abstract}
	The incompressible smoothed particle hydrodynamics method (ISPH) is a numerical method widely used for accurately and efficiently solving flow problems with free surface effects. 
	However, to date there has been little mathematical investigation of properties such as stability or convergence for this method. 
	In this paper, unique solvability and stability are mathematically analyzed for implicit and semi-implicit schemes in the ISPH method. 
	Three key conditions for unique solvability and stability are introduced: a connectivity condition with respect to particle distribution and smoothing length, a regularity condition for particle distribution, and a time step condition. 
	The unique solvability of both the implicit and semi-implicit schemes in two- and three-dimensional spaces is established with the connectivity condition. 
	The stability of the implicit scheme in two-dimensional space is established with the connectivity and regularity conditions. 
	Moreover, with the addition of the time step condition, the stability of the semi-implicit scheme in two-dimensional space is established. 
	As an application of these results, modified schemes are developed by redefining discrete parameters to automatically satisfy parts of these conditions.
\end{abstract}

\maketitle

\section{Introduction}
The smoothed particle hydrodynamics (SPH) method \cite{gingold1977smoothed,lucy1977numerical} is a kind of numerical method for solving partial differential equations and discretizing them in space using a weighted average of interactions between particles within a neighborhood defined by a smoothing length. 
For the incompressible Navier--Stokes equations, the incompressible smoothed particle hydrodynamics (ISPH) method, by which the equations are discretized by the SPH method in space and a semi-implicit projection method \cite{chorin1968numerical,gresho1990theoryI} in time, was developed by Cummins and Rudman \cite{cummins1999sph}. 
The ISPH method has been widely used as a numerical method as it is able to accurately and efficiently solve flow problems with free surface effects \cite{asai2012stabilized,lind2012incompressible,xu2009accuracy}. 
Moreover, in order to simulate problems with high viscosity, an ISPH method that uses an implicit projection method has been developed \cite{lind2016high}. 

However, there is almost no mathematical background on properties such as stability or convergence for the ISPH method. 
Although there are a few mathematical analyses for the SPH method or related particle methods, e.g., error estimates for the SPH method with particle volumes related to the vortex method \cite{moussa2006convergence,moussa2000convergence,raviart1985analysis} and error estimates for Poisson and heat equations of a generalized particle method \cite{imoto2016phd}, their results do not directly apply to the ISPH method. 
Hence, the identification of discrete parameter conditions necessary for obtaining stable results has had to rely on experimental studies \cite{shao2003incompressible}. 

This paper establishes the mathematical properties of unique solvability and stability for implicit and semi-implicit schemes in the ISPH method. 
We introduce three key conditions for unique solvability and stability: a connectivity condition with respect to the particle distribution and smoothing length, a regularity condition for the particle distribution, and a time step condition corresponding to viscous diffusion. 
Then, we show the unique solvability of both the implicit and semi-implicit schemes in two- and three-dimensional spaces with the connectivity condition. 
We go on to prove the stability of velocity for the implicit scheme in two-dimensional space with the connectivity and regularity conditions. 
Further, we show the stability of velocity for the semi-implicit scheme in two-dimensional space with the addition of the time step condition. 
The main advantage of these results in the engineering sense is to clarify conditions required for stable computing in the ISPH method. 
As an application of these results, we introduce modified schemes with discrete parameters redefined to automatically satisfy the semi-regularity and time step conditions.

\section{Incompressible smoothed particle hydrodynamics method}
\label{sec:formulation}
Let $\Dm$ be a bounded domain in $\dRd\,(\Dim=2,3)$ with smooth boundary $\Bd$. 
The boundary $\Bd$ is divided into two parts: a wall boundary $\BdWall\subset\Bd$ and a free surface boundary $\BdSf\deq\Bd\setminus\BdWall$; see Fig.~\ref{fig:domain}.
\begin{figure}[th]
	\includegraphics[width=60mm]{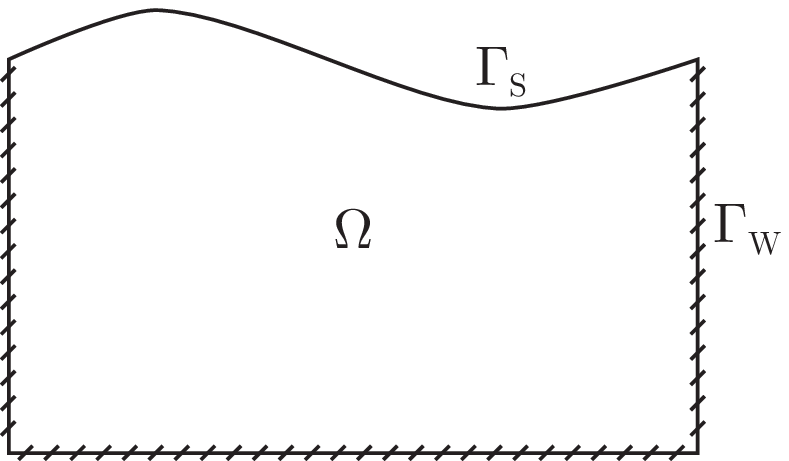}
	\caption{Domain $\Dm$ and boundaries $\BdWall$ and $\BdSf$}
	\label{fig:domain}
\end{figure} 

We consider the incompressible Navier--Stokes equations: Find $\Vel:\Dm\times(0,T)\ra\dRd$ and $\Pres:\Dm\times(0,T)\ra\dR$ such that
\begin{equation}
	\left\lbrace \begin{array}{@{\,}ll}
		\ds\mderiv{1}{t}{\Vel} =-\frac{1}{\rho}\Grad\Pres+ \nu\Lap\Vel + \force,\quad &(x,t)\in\Dm\times(0,\tmax), 
		\\
		\Div\Vel = 0, \quad&(x,t)\in\Dm\times(0,\tmax), 
		\\
		\Vel= \VelIni, \quad &x\in\Dm,~t=0, 
		\\
		\Vel = 0, \quad&(x,t)\in\BdWall\times (0,\tmax),
	\end{array}\right. 
	\label{Navier-Stokes}
\end{equation}
where $\Vel:\Dm\times (0,\tmax)\ra\dRd$, $\Pres:\Dm\times(0,\tmax)\ra\dR$, $\rho>0$, $\nu>0$, $f:\Dm\times(0,\tmax)\ra\dRd$, and $\VelIni:\Dm\ra\dRd$ denote velocity, pressure, density, kinematic viscosity, body force, and initial velocity of the fluid, respectively. 
Furthermore, $\mderivD/\mderivD t$ denotes the material derivative defined as $\mderivD/\mderivD t\deq\partial/\partial t+\Vel\cdot\nabla$. 
The unknowns are the velocity $\Vel$ and pressure $\Pres$. 
We assume the uniqueness and existence of a smooth solution for the incompressible Navier--Stokes equations \eqref{Navier-Stokes}. 

We introduce the ISPH method. 
Let $\Dt>0$ be the time step. 
Let $\TStepMax$ be $\TStepMax\deq\gausssymbol{\tmax/\Dt}$, where $\gausssymbol{\tmax/\Dt}$ denotes the greatest integer that is less than or equal to $\tmax/\Dt$. 
For $\TStep=0,1,\dots,\TStepMax$, the $\TStep$th time $\tk$ is defined as $\tk\deq\TStep\,\Dt$. 
For $\N\in\dN$, we define a particle distribution $\PtSet$ as
\begin{align}
	\PtSet \deq \Set{\Pt{\i}\in\Dm\cup\Bd}{\i=1,2,\dots,\N,~\Pt{\i}\neq\Pt{\j}\,(\i\neq\j)}.
	\label{def:PtSet}
\end{align}
We refer to $\Pt{\i}\in\PtSet$ as a particle.   
Let $\PtSetT{\k}$ and $\PtT{\i}{\k}$ be a particle distribution and an $\i$th particle at $\tk$, respectively. 
Let $\IndexSetAll\deq\{1,2,\dots,\N\}$. 
Let $\IndexSetSfT{\k}\subset\IndexSetAll$ be the index set of particles judged to be on the free surface, $\IndexSetWlT{\k}\subset\IndexSetAll$ the index set of particles on the wall boundary, and $\IndexSetFlT{\k}\subset\IndexSetAll$ the index set of the other particles. 
We refer to $\PtT{\i}{\k}\,(\i\in\IndexSetFlT{\k})$, $\PtT{\i}{\k}\,(\i\in\IndexSetSfT{\k})$, and $\PtT{\i}{\k}\,(\i\in\IndexSetWlT{\k})$ as an inner fluid particle, a surface particle, and a wall particle, respectively; see Fig.~\ref{fig:particle_distribution}. 
\begin{figure}[th]
	\includegraphics[width=100mm]{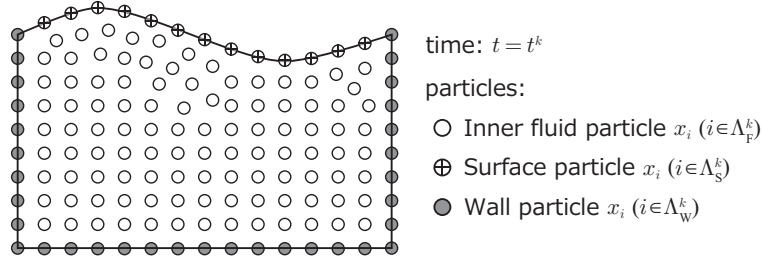}
	\caption{Particle distribution}
	\label{fig:particle_distribution}
\end{figure}

For $\N\in\dN$, we define a particle volume set $\PvSet$ as
\begin{align}
	\PvSet \deq \Set{\Pv{\i}>0}{\i=1,2,\dots,\N,~\sum_{\i=1}^\N\Pv{\i}=\Meas{\Dm}}. 
	\label{def:PvSet}
\end{align}
Here, $\Meas{\Dm}$ denotes the volume of $\Dm$. 
We refer to $\Pv{\i}\in\PvSet$ as a particle volume.  
In the SPH method, by introducing a particle density $\DensSPH{\i}$ and particle mass $\MassSPH{\i}$, the particle volume $\Pv{\i}$ is generally given as  $\Pv{\i}=\DensSPH{\i}/\MassSPH{\i}$. 

For $\w:[0,\infty)\ra\dR$, we consider the following conditions:
\begin{equation}
	\w(r)
	\begin{cases}
		>0, \quad &0<r<\WCofSupp,\\
		=0, \quad &r\geq\WCofSupp;
	\end{cases}
	\label{w_cond:compact_support}
\end{equation}
\begin{equation}
	\WeightDerive(r)
	\begin{cases}
		<0, \quad &0<r<\WCofSupp,\\
		=0, \quad &r=0 {\mbox{ or }} r\geq\WCofSupp;
	\end{cases}
	\label{w_cond:monotonically_decreasing}
\end{equation}
\begin{equation}
	\int_{\dRd}\w(|\x|)\dx=1;
	\label{w_cond:unity}
\end{equation}
\begin{equation}
	\w\in\FsC{2}{[0,\infty)}.
	\label{w_cond:2nd_smoothness}
\end{equation}
Here, $\WCofSupp$ and $\WeightDerive$ are a positive constant and the first derivative of $\w$, respectively. 
We define a function set $\FsWeightFunc$ as
\begin{equation}
	\FsWeightFunc\deq\Set{\w:[0,\infty)\ra\dR}{\mbox{$\w$ satisfies \eqref{w_cond:compact_support}--\eqref{w_cond:2nd_smoothness}}}. 
	\label{def:FsWeightFunc}
\end{equation}
We refer to $\w\in\FsWeightFunc$ as a reference weight function. 
We define a smoothing length $\h$ as a positive number that satisfies $\WCofSupp\h>\min\SetNd{|\PtT{\i}{\k}-\PtT{\j}{\k}|}{\i\neq\j}$. 
For the reference weight function $\w\in\FsWeightFunc$ and the smoothing length $\h$, a weight function $\wh:[0,\infty)\ra\dR$ is defined as
\begin{equation}
	\wh(r) \deq \frac{1}{\h^{\Dim}}\w\left(\frac{r}{\h}\right). 
	\label{def:wh}
\end{equation}
For example, in the SPH method, the following reference weight functions are often used: the cubic B-spline ($\WCofSupp=2$)
\begin{equation}
	\w(r)\deq\WCoff{cubic}
	\begin{cases}
		1-\dfrac{3}{2}r^2+\dfrac{4}{3}r^3,\quad&0\leq r<1,
		\smallskip\\
		\dfrac{1}{4}(2-r)^3,\quad&1\leq r< 2,
		\smallskip\\
		0,\quad&2\leq r,
	\end{cases}
	\label{def:weight_cubic}
\end{equation}
and the quintic B-spline ($\WCofSupp=3$)
\begin{equation}
	\w(r)\deq\WCoff{quintic}
	\begin{cases}
		(3-r)^5-6(2-r)^5+15(1-r)^5,\quad&0\leq r<1,
		\smallskip\\
		(3-r)^5-6(2-r)^5,\quad&1\leq r<2,
		\smallskip\\
		(3-r)^5,\quad&2\leq r< 3,
		\smallskip\\
		0,\quad&3\leq r.
	\end{cases}
	\label{def:weight_quintic}
\end{equation}
Here, $\WCoff{cubic}$ and $\WCoff{quintic}$ are constants dependent on $\Dim$ to satisfy the condition \eqref{w_cond:unity} and are calculated as
\begin{equation}
	\WCoff{cubic}=
	\begin{cases}
		\dfrac{10}{7\pi},\quad&\Dim=2,
		\smallskip\\
		\dfrac{1}{\pi},\quad&\Dim=3;
	\end{cases}
	\qquad
	\WCoff{quintic}=
	\begin{cases}
		\dfrac{7}{478\pi},\quad&\Dim=2,
		\smallskip\\
		\dfrac{1}{120\pi},\quad&\Dim=3. 
	\end{cases}
\end{equation}

For an index set $\Lambda\subset\IndexSetAll$ and $\k=0,1,\dots,\N$, a function space $\FsSolAppT{\k}(\Lambda)$ is defined as
\begin{equation}
	\FsSolAppT{\k}(\Lambda)\deq\BrM{\f:\{\PtT{\i}{\k}\in\PtSetT{\k}\}_{\i\in\Lambda}\ra\dR}. 
\end{equation}
For $\k=0,1,\dots,\TStepMax$, a function $\DmFunc^{\k}\in\FsSolAppT{\k}(\IndexSetAll)$ is defined as
\begin{equation}
	\DmFunc^{\k}(\PtT{\i}{\k})\deq
	\begin{cases}
		\DmFunc(\PtT{\i}{\k},\tk),\quad&\i\in\IndexSetFlT{\k}\cup\IndexSetSfT{\k},
		\\
		0,\quad&\i\in\IndexSetWlT{\k}. 
	\end{cases}
\end{equation}
Hereinafter, for a function $\f^{\k}\,(\k=0,1,\dots,\TStepMax)$ defined in $\PtSetT{\k}$, we denote $\f^{\k}(\PtT{\i}{\k})$ as $\f^{\k}_{\i}$. 
Now, we consider the following two schemes in the ISPH method. \\
{\it Implicit scheme}: find $\VelApp{}{\k}\in\FsSolAppT{\k}(\IndexSetAll)^\Dim\,(\k=0,1,\dots,\TStepMax)$ and $\PresApp{}{\k}\in\FsSolAppT{\k}(\IndexSetFlT{\k}\cup\IndexSetSfT{\k})\,(\k=1,2,\dots,\TStepMax)$ such that
\begin{equation}
	\VelApp{\i}{0} = \VelIni(\PtT{\i}{0}),\qquad \i=1,2,\dots,\N,
	\tag{I-a}\label{implicit:initial_velocity}
\end{equation}
and for $\k=0,1,\dots,\TStepMax-1$, 
\begin{align}
	&\left\lbrace \begin{array}{@{\,}r@{\,}ll}
		\dfrac{\VelPred{\i}{\k+1}-\VelApp{\i}{\k}}{\Dt} &= \KVisc\VelPredLap{\i}{\k+1}+\DmFunc_{\i}^{\k},\qquad&\i\in\IndexSetFlT{\k}\cup\IndexSetSfT{\k}, 
		\tag{I-b}\label{implicit:prediction}\\
		\VelPred{\i}{\k+1}&=0,\qquad&\i\in\IndexSetWlT{\k};
	\end{array}\right. \\
	&\left\lbrace \begin{array}{@{\,}r@{\,}ll}
		\PresAppLap{\i}{\k+1} &= \dfrac{\Dens}{\Dt}\VelPredDiv{\i}{\k+1},\hspace{9.5ex}&\i\in\IndexSetFlT{\k},
		\\
		\PresApp{\i}{\k+1} &=0,\qquad&\i\in\IndexSetSfT{\k};
	\end{array}\right. 
	\tag{I-c}\label{implicit:Poisson}\\
	&\left\lbrace \begin{array}{@{\,}r@{\,}ll}
		\dfrac{\VelApp{\i}{\k+1}-\VelPred{\i}{\k+1}}{\Dt} &=-\dfrac{1}{\Dens}\PresAppGrad{\i}{\k+1},\hspace{5.5ex}&\i\in\IndexSetFlT{\k}\cup\IndexSetSfT{\k},
		\\
		\VelApp{\i}{\k+1} &= 0,\qquad&\i\in\IndexSetWlT{\k}. 
	\end{array}\right. 
	\tag{I-d}\label{implicit:correction}
\end{align}
{\it Semi-implicit scheme}: find $\VelApp{}{\k}\in\FsSolAppT{\k}(\IndexSetAll)^\Dim\,(\k=0,1,\dots,\TStepMax)$ and $\PresApp{}{\k}\in\FsSolAppT{\k}(\IndexSetFlT{\k}\cup\IndexSetSfT{\k})\,(\k=1,2,\dots,\TStepMax)$ such that
\begin{equation}
	\VelApp{\i}{0} = \VelIni(\PtT{\i}{0}),\qquad \i=1,2,\dots,\N,
	\tag{SI-a}\label{semi-implicit:initial_velocity}
\end{equation}
and for $\k=0,1,\dots,\TStepMax-1$, 
\begin{align}
	&\left\lbrace \begin{array}{@{\,}r@{\,}ll}
		\dfrac{\VelPred{\i}{\k+1}-\VelApp{\i}{\k}}{\Dt} &= \KVisc\VelAppLap{\i}{\k}+\DmFunc_{\i}^{\k},\qquad&\i\in\IndexSetFlT{\k}\cup\IndexSetSfT{\k},
		\\
		\VelPred{\i}{\k+1}&=0,\qquad&\i\in\IndexSetWlT{\k};
	\end{array}\right. 
	\tag{SI-b}\label{semi-implicit:prediction}\\
	&\left\lbrace \begin{array}{@{\,}r@{\,}ll}
		\PresAppLap{\i}{\k+1} &= \dfrac{\Dens}{\Dt}\VelPredDiv{\i}{\k+1},\hspace{7.5ex}&\i\in\IndexSetFlT{\k},
		\\
		\PresApp{\i}{\k+1} &=0,\qquad&\i\in\IndexSetSfT{\k};
	\end{array}\right. 
	\tag{SI-c}\label{semi-implicit:Poisson}\\
	&\left\lbrace \begin{array}{@{\,}r@{\,}ll}
		\dfrac{\VelApp{\i}{\k+1}-\VelPred{\i}{\k+1}}{\Dt} &=-\dfrac{1}{\Dens}\PresAppGrad{\i}{\k+1},\hspace{3.5ex}&\i\in\IndexSetFlT{\k}\cup\IndexSetSfT{\k},
		\\
		\VelApp{\i}{\k+1} &= 0,\qquad&\i\in\IndexSetWlT{\k}. 
	\end{array}\right. 
	\tag{SI-d}\label{semi-implicit:correction}
\end{align}
Here, the approximations of the derivatives are defined as 
\begin{align}
	\VelAppLap{\i}{\k}&\deq 2\sum_{\j\neq\i}\Pv{\j} \frac{\VelApp{\i}{\k}-\VelApp{\j}{\k}}{|\PtT{\i}{\k}-\PtT{\j}{\k}|}\frac{\PtT{\i}{\k}-\PtT{\j}{\k}}{|\PtT{\i}{\k}-\PtT{\j}{\k}|}\cdot\Grad\wh(|\PtT{\i}{\k}-\PtT{\j}{\k}|), 
	\label{def:LapApp:VelApp}\\
	\VelPredLap{\i}{\k+1}&\deq 2\sum_{\j\neq\i}\Pv{\j}\frac{\VelPred{\i}{\k+1}-\VelPred{\j}{\k+1}}{|\PtT{\i}{\k}-\PtT{\j}{\k}|}\frac{\PtT{\i}{\k}-\PtT{\j}{\k}}{|\PtT{\i}{\k}-\PtT{\j}{\k}|}\cdot\Grad\wh(|\PtT{\i}{\k}-\PtT{\j}{\k}|), 
	\label{def:LapApp:VelPred}\\
	\VelPredDiv{\i}{\k+1}&\deq\sum_{\j=1}^\N\Pv{\j} \BrSNd{\VelPred{\j}{\k+1}+\VelPred{\i}{\k+1}}\cdot\Grad\wh(|\PtT{\i}{\k}-\PtT{\j}{\k}|),
	\label{def:DivAppPlus}\\
	\PresAppGrad{\i}{\k+1}&\deq\sum_{\j\in\IndexSetFlT{\k}\cup\IndexSetSfT{\k}}\Pv{\j} \BrSNd{\PresApp{\j}{\k+1}-\PresApp{\i}{\k+1}}\Grad\wh(|\PtT{\i}{\k}-\PtT{\j}{\k}|),
	\label{def:GradAppMinus}
	\\
	\PresAppLap{\i}{\k+1}&\deq 2\sum_{\j\in\IndexSetFlT{\k}\cup\IndexSetSfT{\k}\setminus\{\i\}}\Pv{\j} \frac{\PresApp{\i}{\k+1}-\PresApp{\j}{\k+1}}{|\PtT{\i}{\k}-\PtT{\j}{\k}|}\frac{\PtT{\i}{\k}-\PtT{\j}{\k}}{|\PtT{\i}{\k}-\PtT{\j}{\k}|}\cdot\Grad\wh(|\PtT{\i}{\k}-\PtT{\j}{\k}|).  
	\label{def:LapApp:PresApp}
\end{align}
In both schemes, the particles are updated by
\begin{equation}
	\PtT{\i}{\k+1}=\PtT{\i}{\k}+\Dt\VelApp{\i}{\k+1},\qquad \i=1,2,\dots,\N,~\k=0,1,\dots,\TStepMax-1. 
\end{equation}
Equations \eqref{implicit:prediction} and \eqref{semi-implicit:prediction} are referred to as the prediction step, equations \eqref{implicit:Poisson} and \eqref{semi-implicit:Poisson} as the pressure Poisson equation, and equations \eqref{implicit:correction} and \eqref{semi-implicit:correction} as the correction step. 
The difference between these schemes is only the viscous term in the prediction steps \eqref{implicit:prediction} and \eqref{semi-implicit:prediction}. 
Because \eqref{implicit:prediction} and \eqref{implicit:Poisson} are implicit, we refer to the first scheme as the implicit scheme. 
In contrast, because only the equation for pressure \eqref{semi-implicit:Poisson} is implicit, we refer to the second scheme as the semi-implicit scheme. 

\begin{remark}
	Although many approximation operators have been proposed for the ISPH method, the approximation operators \eqref{def:LapApp:VelApp}--\eqref{def:LapApp:PresApp} are chosen because they satisfy mathematical properties in Sect.~\ref{subsec:Discrete_Sobolev_norms}. 
	Analyses of the ISPH method using other approximation operators are left as future problems. 
\end{remark}

\section{Key conditions for discrete parameters}
In order to analyze the unique solvability and the stability for the implicit and semi-implicit schemes, we introduce three important conditions for discrete parameters: the connectivity, semi-regularity, and time step conditions. 
\begin{definition}[$\h$-connectivity condition]
	\label{def_connectivity}
	For the smoothing length $\h$, the particle distribution $\PtSetT{\k}$ satisfies the $\h$-connectivity condition if for all $\i\in\IndexSetFlT{\k}$, there exist sequences $\{\IndexSetSeq{\i}{\l}\}_{\l=1}^{\IndexSetSeqLast}$ and $\{\IndexSetSeqAst{\i}{\l}\}_{\l=1}^{\IndexSetSeqLastAst}\subset\IndexSetAll$ such that
	\begin{equation}
		\IndexSetSeq{\i}{1}=\i,
		\qquad 0<|\PtT{\IndexSetSeq{\i}{\l}}{\k}-\PtT{\IndexSetSeq{\i}{\l+1}}{\k}|<\WCofSupp\h\,(1\leq\l<\IndexSetSeqLast),
		\qquad
		\IndexSetSeq{\i}{\l}\in\IndexSetFlT{\k}\,(1\leq\l<\IndexSetSeqLast),\qquad
		\IndexSetSeq{\i}{\IndexSetSeqLast}\in\IndexSetSfT{\k},
		\label{cond:h-connectivity:sf}
	\end{equation}
	\begin{equation}
		\IndexSetSeqAst{\i}{1}=\i,\qquad
		0<|\PtT{\IndexSetSeqAst{\i}{\l}}{\k}-\PtT{\IndexSetSeqAst{\i}{\l+1}}{\k}|<\WCofSupp\h\,(1\leq\l<\IndexSetSeqLastAst),\qquad
		\IndexSetSeqAst{\i}{\l}\in\IndexSetFlT{\k}\,(1\leq\l<\IndexSetSeqLastAst),\qquad
		\IndexSetSeqAst{\i}{\IndexSetSeqLastAst}\in\IndexSetWlT{\k}. 
		\label{cond:h-connectivity:wl}
	\end{equation}
\end{definition}

\begin{definition}[semi-regularity condition]
	\label{def:semi-regular}
	A family $\{(\{\PtSetT{\k}\}_{\k=0}^\TStepMax, \PvSet, \h, \Dt)\}$ satisfies the semi-regularity condition if there exists a positive constant $\ConstSemiReg$ such that
	\begin{equation}
		\max_{\i=1,2,\dots,\N}\BrM{\sum_{\j=1}^\N\Pv{\j}\BrANd{\PtT{\i}{\k}-\PtT{\j}{\k}}\BrANd{\whd(|\PtT{\i}{\k}-\PtT{\j}{\k}|)}}\leq \Dim+\ConstSemiReg\Dt,\quad \k=1,2,\dots,\TStepMax. 
		\label{cond:semi-regular}
	\end{equation}
\end{definition}

\begin{definition}[time step condition]
	\label{def:time-step-cond}
	A family $\{(\{\PtSetT{\k}\}_{\k=0}^\TStepMax, \PvSet, \h, \Dt)\}$ satisfies the time step condition if there exists a constant $\ConstTStepCond\,(0<\ConstTStepCond<1)$ such that
	\begin{equation}
		\Dt\leq\dfrac{\ConstTStepCond}{2\KVisc}\BrL{\max_{\i=1,2,\dots,\N}\BrS{\sum_{\j\neq\i}\Pv{\j}\dfrac{\BrANd{\whd(|\PtT{\i}{\k}-\PtT{\j}{\k}|)}}{|\PtT{\i}{\k}-\PtT{\j}{\k}|}}}^{-1},\qquad \k=1,2,\dots,\TStepMax. 
		\label{cond:time-step}
	\end{equation}
\end{definition}

\begin{remark}
	Consider a graph $G$ whose vertex set is particle distribution $\PtSetT{\k}$ and whose edges are a pair $(\PtT{\i}{\k}, \PtT{\j}{\k})$ that satisfies $0<|\PtT{\i}{\k}-\PtT{\j}{\k}|<\WCofSupp\h$, as shown, for example, in Fig.~\ref{fig:connectivity}. 
	By Definition \ref{def_connectivity}, that the particle distribution $\PtSetT{\k}$ satisfies the $\h$-connectivity condition is equivalent to all fluid particles having a path on $G$ to a surface particle and a wall particle. 
\end{remark}

\begin{figure}[ht]
	\centering
	\includegraphics[width=120mm]{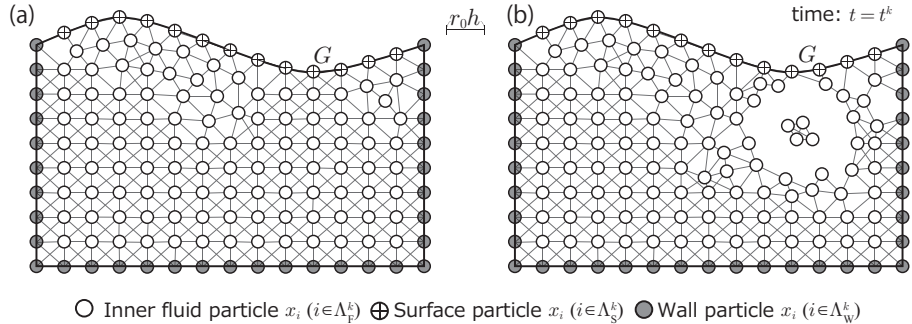}
	\caption{Particle distributions $\PtSetT{\k}$ and their graphs $G$: the left and right sides show particle distributions (a) with and (b) without satisfying the $\h$-connectivity condition, respectively}
	\label{fig:connectivity}
\end{figure}

\begin{remark}
	Because the approximation 
	\begin{equation}
		\max_{\i=1,2,\dots,\N}\BrM{\sum_{\j=1}^\N\Pv{\j}\BrANd{\PtT{\i}{\k}-\PtT{\j}{\k}}\BrANd{\whd(|\PtT{\i}{\k}-\PtT{\j}{\k}|)}}
		\approx\int_{\dRd}\BrA{\y}\BrA{\whd(|\y|)}\dy
		=\Dim 
	\end{equation}
	holds, the family $\{(\{\PtSetT{\k}\}_{\k=0}^\TStepMax, \PvSet, \h, \Dt)\}$ can satisfy the semi-regularity condition under appropriate settings of discrete parameters. 
	In particular, as the left side of \eqref{cond:semi-regular} becomes larger when a cohesion of particles occurs, the semi-regularity condition partially denotes a regularity of particle distributions. 
\end{remark}

\begin{remark}
	The time step condition \eqref{cond:time-step} corresponds to a constraint of the time step due to viscous diffusion. 
	Experimentally, the constraint is given by
	\begin{equation}
		\Dt \leq\TStepCondWCoff\dfrac{\h^2}{\KVisc},
	\end{equation}
	where the coefficient $\TStepCondWCoff$ is usually given as a value on the order of $0.1$ \cite{morris1997modeling,shao2003incompressible}. 
	In contrast, the time step condition \eqref{cond:time-step} becomes
	\begin{align}
		\Dt&\leq\dfrac{\ConstTStepCond}{2\KVisc}\BrL{\max_{\i=1,2,\dots,\N}\BrS{\sum_{\j\neq\i}\Pv{\j}\dfrac{\BrANd{\whd(|\PtT{\i}{\k}-\PtT{\j}{\k}|)}}{\BrANd{\PtT{\i}{\k}-\PtT{\j}{\k}}}}}^{-1}<\widehat{\TStepCondWCoff}\dfrac{\h^2}{\KVisc},
	\end{align}
	where $\widehat{\TStepCondWCoff}$ is defined by
	\begin{equation}
		\widehat{\TStepCondWCoff}\deq\dfrac{1}{2\h^2}\BrL{\max_{\i=1,2,\dots,\N}\BrS{\sum_{\j\neq\i}\Pv{\j}\dfrac{\BrANd{\whd(|\PtT{\i}{\k}-\PtT{\j}{\k}|)}}{\BrANd{\PtT{\i}{\k}-\PtT{\j}{\k}}}}}^{-1}. 
	\end{equation}
	Therefore, $\widehat{\TStepCondWCoff}$ corresponds to $\TStepCondWCoff$. 
	Moreover, because the approximation 
	\begin{align}
		\widehat{\TStepCondWCoff}
		&=\dfrac{1}{2\h^2}\BrL{\max_{\i=1,2,\dots,\N}\BrS{\sum_{\j\neq\i}\Pv{\j}\dfrac{\BrANd{\whd(|\PtT{\i}{\k}-\PtT{\j}{\k}|)}}{\BrANd{\PtT{\i}{\k}-\PtT{\j}{\k}}}}}^{-1}
		\nn\\
		&\approx\dfrac{1}{2\h^2}\BrS{\int_{\dRd}\dfrac{\BrA{\whd(|\y|)}}{|\y|}\dy}^{-1}\nn\\
		&=\dfrac{1}{2}\BrS{\int_{\dRd}\dfrac{\BrA{\WeightDerive(|\y|)}}{|\y|}\dy}^{-1}
	\end{align}
	holds, we can estimate the approximate value of  $\widehat{\TStepCondWCoff}$ for reference weight functions in advance. 
	When $\w$ is the cubic B-spline \eqref{def:weight_cubic}, the approximate value of $\widehat{\TStepCondWCoff}$ is calculated as 
	\begin{equation}
		\widehat{\TStepCondWCoff}\approx\dfrac{1}{2}\BrS{\int_{\dRd}\dfrac{\BrA{\WeightDerive(|\y|)}}{|\y|}\dy}^{-1}=
		\begin{cases}
			\dfrac{7}{40}=0.175,\quad&\Dim=2, 
			\smallskip\\
			\dfrac{1}{6}\approx0.167,\quad&\Dim=3, 
		\end{cases}
	\end{equation}
	and when $\w$ is the quintic B-spline \eqref{def:weight_quintic} as 
	\begin{equation}
		\widehat{\TStepCondWCoff}\approx\dfrac{1}{2}\BrS{\int_{\dRd}\dfrac{\BrA{\WeightDerive(|\y|)}}{|\y|}\dy}^{-1}=
		\begin{cases}
			\dfrac{239}{924}\approx 0.259,\quad&\Dim=2, 
			\smallskip\\
			\dfrac{1}{4}= 0.250,\quad&\Dim=3. 
		\end{cases}
	\end{equation}
	Because $\TStepCondWCoff$ is experimentally given as on the order of $0.1$, the above approximate values of $\widehat{\TStepCondWCoff}$ agree well with the experimental values. 
\end{remark}

\section{Unique solvability and stability}
\label{sec:unique_solvability_stability}
\subsection{Unique solvability}
\label{subsec:unique_solvability}
First, we show the unique solvability for the implicit and semi-implicit schemes. 
\begin{theorem}
	\label{theorem:unique_solvability}
	If particle distribution $\PtSetT{\k}$ satisfies the $\h$-connectivity condition for all $\k=0,1,\dots,\TStepMax-1$, then both the implicit and semi-implicit schemes have a unique solution. 
\end{theorem}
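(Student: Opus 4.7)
The plan is to reduce the unique solvability of each scheme to the unique solvability of its substeps, each of which is a finite-dimensional linear system. The initializations \eqref{implicit:initial_velocity}, \eqref{semi-implicit:initial_velocity} are mere assignments; the correction steps \eqref{implicit:correction}, \eqref{semi-implicit:correction} are explicit algebraic updates once $\VelPred{\i}{\k+1}$ and $\PresApp{\i}{\k+1}$ are known; and the semi-implicit prediction step \eqref{semi-implicit:prediction} is also explicit. The only nontrivial subproblems are therefore the implicit prediction step \eqref{implicit:prediction} and the pressure Poisson equations \eqref{implicit:Poisson}, \eqref{semi-implicit:Poisson}. Since each is a square linear system in finite dimensions, uniqueness implies existence, so I would focus on uniqueness.

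The common algebraic backbone of both subproblems is that the approximate Laplacians \eqref{def:LapApp:VelPred} and \eqref{def:LapApp:PresApp} behave like weighted graph Laplacians with non-positive edge weights. Writing $\Grad\wh(|\x|)=\whd(|\x|)\x/|\x|$ and using the sign condition \eqref{w_cond:monotonically_decreasing}, each summand contributes a coefficient $2V_{\j}\whd(|\PtT{\i}{\k}-\PtT{\j}{\k}|)/|\PtT{\i}{\k}-\PtT{\j}{\k}|$ that is $\leq 0$ and is $<0$ precisely when $0<|\PtT{\i}{\k}-\PtT{\j}{\k}|<\WCofSupp\h$. After rearranging each equation into matrix form, the implicit-prediction system has row sum exactly $1$ on every row (the identity term providing strict diagonal dominance by $1$), whereas the pressure matrix on the fluid rows $\IndexSetFlT{\k}$ is only weakly diagonally dominant, with strict dominance exactly on those rows whose particle has a direct surface neighbor.

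Uniqueness then follows from a discrete maximum principle. For the implicit prediction step, if a homogeneous solution attains its maximum at some $\i_0\in\IndexSetFlT{\k}\cup\IndexSetSfT{\k}$, the equation at $\i_0$ expresses $\VelPred{\i_0}{\k+1}$ as a sum of non-positive differences $\VelPred{\j}{\k+1}-\VelPred{\i_0}{\k+1}$ with non-negative coefficients, yielding $\VelPred{\i_0}{\k+1}\leq 0$; the symmetric bound on the minimum, together with the trivial case $\i_0\in\IndexSetWlT{\k}$, gives $\VelPred{}{\k+1}\equiv 0$. The $\h$-connectivity hypothesis is not actually used here, the identity term on the diagonal being sufficient on its own. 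For the Poisson equation the same rewriting at an interior maximum $\i_0\in\IndexSetFlT{\k}$ now forces every neighbor $\j\in\IndexSetFlT{\k}\cup\IndexSetSfT{\k}$ with $0<|\PtT{\i_0}{\k}-\PtT{\j}{\k}|<\WCofSupp\h$ to attain the same maximum (the row sum on fluid rows is now zero, so equality throughout is the only way a sum of non-positive terms with strictly positive coefficients can vanish); iterating this assertion along the sequence $\{\IndexSetSeq{\i_0}{\l}\}_{\l=1}^{\IndexSetSeqLast}$ guaranteed by \eqref{cond:h-connectivity:sf} propagates the maximum to $\IndexSetSfT{\k}$, where it equals $0$, and the symmetric argument on the minimum completes the proof.

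The main technical obstacle is this propagation step in the Poisson case: one must verify carefully that at a positive interior maximum the vanishing of a sum of non-positive terms with strictly positive coefficients forces each individual term to vanish, and then iterate cleanly along the connectivity chain \eqref{cond:h-connectivity:sf} while keeping track of which indices lie in $\IndexSetFlT{\k}$ and which in $\IndexSetSfT{\k}$. Once this propagation argument is in place, the implicit-prediction case follows by the simpler strictly-diagonally-dominant variant of the same argument, and the unique solvability of both schemes drops out since every remaining substep is explicit.
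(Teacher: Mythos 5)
Your reduction to the three implicit substeps, and the observation that for a square finite-dimensional system uniqueness implies existence, are exactly the paper's opening moves. For the prediction step \eqref{implicit:prediction} your maximum principle is strict diagonal dominance in disguise, which is precisely the paper's argument: the identity coming from the time-derivative term gives $|\Matrix_{\i\i}|-\sum_{\j\neq\i}|\Matrix_{\i\j}|\geq 1$, and you are right that $\h$-connectivity plays no role there. (One small inaccuracy: the reduced prediction matrix has row sums $1+\Dt\KVisc\sum_{\l\in\IndexSetWlT{\k}}\Pv{\l}\MatrixComp{\i}{\l}\geq 1$ rather than exactly $1$, since wall particles contribute to the diagonal but are eliminated as columns; this only strengthens the dominance you need.) Where you genuinely diverge is the pressure Poisson system: the paper writes it as $\MatrixPoisson\MatrixDiag\VecSolPoisson=\VecSoucePoisson$ with $\MatrixDiag=\Diag(\Pv{\i})$, notes that $\MatrixPoisson$ is symmetric, and proves positive definiteness by expanding the quadratic form into the sum of nonnegative terms $(\Pv{\j}\CofPositiveDifinite_{\i}-\Pv{\i}\CofPositiveDifinite_{\j})^2\MatrixComp{\i}{\j}/(\Pv{\i}\Pv{\j})$ plus surface contributions, then extracting the terms along the $\h$-connectivity chain \eqref{cond:h-connectivity:sf} to force $\CofPositiveDifinite=0$; you instead run a discrete maximum principle and propagate the extremum along the same chain to a surface particle, where the value is pinned to zero by the Dirichlet condition in \eqref{implicit:Poisson}. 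Both arguments use the chain in the same essential way and both are correct (your propagation step does go through: at an extremum of the homogeneous solution the row identity is a vanishing sum of one-signed terms whose coefficients are strictly positive exactly for neighbours within distance $\WCofSupp\h$, so every such neighbour, in particular the next chain element, attains the same extremal value). The paper's route buys the stronger conclusion that the volume-scaled pressure matrix is symmetric positive definite --- useful for iterative solvers and the same structure exploited in \eqref{lem:propaties_disc_norm:equiv_norm} --- while your route is more elementary and does not rely on the symmetry of the discrete Laplacian, so it would survive nonsymmetric modifications of the operator.
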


\begin{proof}
	Because \eqref{implicit:correction}, \eqref{semi-implicit:prediction}, and \eqref{semi-implicit:correction}  are explicit, these are clearly solvable. 
	Therefore, we prove the unique solvability of \eqref{implicit:prediction}, \eqref{implicit:Poisson}, and \eqref{semi-implicit:Poisson}. 
	
	First, we show the unique solvability of the prediction step \eqref{implicit:prediction}. 
	We fix $\k=0,1,\dots,\TStepMax$. 
	Let $\NFlT{\k}$ and $\NSfT{\k}$ be the number of inner fluid particles and  the number of surface particles, respectively, at time step $\tk$. 
	We renumber the index of particles so that $\i=1,2,\dots,\NFlT{\k}\in\IndexSetFlT{\k}$ and $\i=\NFlT{\k}+1,\NFlT{\k}+2,\dots,\NFlT{\k}+\NSfT{\k}\in\IndexSetSfT{\k}$. 
	Let $\MatrixComp{\i}{\j}\,(i,j=1,2,\dots,\N)$ be
	\begin{align}
		\MatrixComp{\i}{\j}&\deq 
		\begin{cases}
			\ds 0, \quad & i=j,\\
			\ds -2\frac{\PtT{\i}{\k}-\PtT{\j}{\k}}{|\PtT{\i}{\k}-\PtT{\j}{\k}|^2}\cdot\Grad\wh(|\PtT{\i}{\k}-\PtT{\j}{\k}|), \quad & \i\neq\j. 
		\end{cases}
	\end{align}
	We define a matrix $\Matrix\in\dR^{(\NFlT{\k}+\NSfT{\k})\times(\NFlT{\k}+\NSfT{\k})}$ and a vector $\VecSouce\in\dR^{\NFlT{\k}+\NSfT{\k}}$ respectively as
	\begin{align}
		\Matrix_{\i\j}&\deq 
		\begin{cases}
			\ds 1+\Dt\KVisc\sum_{\l=1}^{\N}\Pv{\l}\MatrixComp{\i}{\l}, \quad & \i=\j,
			\\
			\ds -\Pv{\j}\MatrixComp{\i}{\j}, \quad & \i\neq\j; 
		\end{cases}
		\\
		\VecSouce_{\i}& \deq \VelApp{\i}{\k}+\Dt\DmFunc_{\i}^{\k},\qquad\i=1,2,\dots,\NFlT{\k}+\NSfT{\k}.
	\end{align}
	Then, \eqref{implicit:prediction} is equivalent to the linear equations
	\begin{equation}
		\Matrix\VecSol=\VecSouce,
	\end{equation}
	where $\VecSol_{\i}\deq\VelPred{\i}{\k+1}\,(\i=1,2,\dots,\NFlT{\k}+\NSfT{\k})$. 
	Therefore, it is sufficient to show that $A$ is non-singular. 
	From \eqref{w_cond:monotonically_decreasing}, we have
	\begin{equation}
		-2\frac{\PtT{\i}{\k}-\PtT{\j}{\k}}{|\PtT{\i}{\k}-\PtT{\j}{\k}|^2}\cdot\Grad\wh(|\PtT{\i}{\k}-\PtT{\j}{\k}|)=-2\frac{\whd(|\PtT{\i}{\k}-\PtT{\j}{\k}|)}{|\PtT{\i}{\k}-\PtT{\j}{\k}|}\geq 0. 
	\end{equation}
	Therefore, as $\MatrixComp{\i}{\j}$ is nonnegative and $\MatrixComp{\i}{\i}=0$, we have, for $\i=1,2,\dots,\NFlT{\k}+\NSfT{\k}$, 
	\begin{equation}
		|\Matrix_{\i\i}|-\sum_{\j\neq\i}|\Matrix_{\i\j}|=1+\Dt\KVisc\sum_{\l=1}^{\N}\Pv{\l}\MatrixComp{\i}{\l}-\sum_{\j\neq\i}\Pv{\j}\MatrixComp{\i}{\j}
		=1+\Dt\KVisc\sum_{\l=\NFlT{\k}+\NSfT{\k}+1}^{\N}\Pv{\l}\MatrixComp{\i}{\l}>0. 
	\end{equation}
	Hence, $A$ is a strictly diagonally dominant matrix. 
	Thus, $A$ is non-singular. 
	
	Next, we show the unique solvability of the Poisson equations \eqref{implicit:Poisson} and \eqref{semi-implicit:Poisson}. 
	We define matrices $\MatrixPoisson, \MatrixDiag\in\dR^{\NFlT{\k}\times\NFlT{\k}}$ and a vector $\VecSoucePoisson\in\dR^{\NFlT{\k}}$ respectively as
	\begin{align}
		\MatrixPoisson_{\i\j}&\deq 
		\begin{cases}
			\ds \sum_{\l=1}^{\NSfT{\k}}\dfrac{\Pv{\l}}{\Pv{\i}}\MatrixComp{\i}{\l}, \quad & \i=\j,
			\\
			\ds -\MatrixComp{\i}{\j}, \quad & \i\neq\j;
		\end{cases}
		\nn\\
		\MatrixDiag&\deq\Diag(\Pv{\i});
		\nn\\
		\VecSoucePoisson_{\i}&\deq\dfrac{\Dens}{\Dt}\VelPredDiv{\i}{\k+1},\qquad\i=1,2,\dots,\NFlT{\k}.
	\end{align}
	Then, \eqref{implicit:Poisson} and \eqref{semi-implicit:Poisson} are equivalent to
	\begin{equation}
		\MatrixPoisson\MatrixDiag\VecSolPoisson=\VecSoucePoisson,
	\end{equation}
	where $\VecSolPoisson_{\i}\deq\PresApp{\i}{\k+1}\,(\i=1,2,\dots,\NFlT{\k})$. 
	As $\Pv{\i}>0\,(\i=1,2,\dots,\NFlT{\k})$, the diagonal matrix $\MatrixDiag$ is non-singular. 
	Therefore, it is sufficient to prove that $\MatrixPoisson$ is non-singular. 
	As $\MatrixPoisson$ is symmetric, we will prove that $\MatrixPoisson$ is a positive definite matrix. 
	For all $\CofPositiveDifinite\in\dR^{\NFlT{\k}}\setminus\{0\}$, we have
	\begin{align}
		\sum_{i,j=1}^{\NFlT{\k}} \CofPositiveDifinite_{\i}\CofPositiveDifinite_{\j} \MatrixPoisson_{\i\j}
		&= 2\sum_{1\leq i<j\leq\NFlT{\k}} \CofPositiveDifinite_{\i}\CofPositiveDifinite_{\j} \MatrixPoisson_{\i\j} + \sum_{\i=1}^{\NFlT{\k}} \CofPositiveDifinite_{\i}^2 \MatrixPoisson_{\i\i}
		\nn\\
		& = -2\sum_{1\leq i<j\leq\NFlT{\k}}\CofPositiveDifinite_{\i}\CofPositiveDifinite_{\j} \MatrixComp{\i}{\j} + \sum_{\i=1}^{\NFlT{\k}} \CofPositiveDifinite_{\i}^2 \sum_{\k=1}^{\NSfT{\k}} \frac{\Pv{\k}}{\Pv{\i}}\MatrixComp{\i}{\k}
		\nn\\
		& = \sum_{1\leq i<j\leq\NFlT{\k}}\frac{\BrS{\Pv{\j} \CofPositiveDifinite_{\i}-\Pv{\i}\CofPositiveDifinite_{\j}}^2}{\Pv{\i} \Pv{\j}}\MatrixComp{\i}{\j} + \sum_{\i=1}^{\NFlT{\k}} \CofPositiveDifinite_{\i}^2 \sum_{k=\NFlT{\k}+1}^{\NSfT{\k}} \frac{\Pv{k}}{\Pv{\i}}\MatrixComp{\i}{\k}. 
		\label{prf_poisson_unique_01}
	\end{align}
	As $\MatrixComp{\i}{\j}$ is nonnegative and $\Pv{\i}$ is positive, \eqref{prf_poisson_unique_01} is nonnegative. 
	For $a\in\dR^{\NFlT{\k}}\setminus\{0\}$, we set $\i$ such that $\CofPositiveDifinite_{\i}\neq 0$. 
	Because of the particle distribution $\PtSetT{\k}$ with $\h$-connectivity, we can take a sequence $\{\IndexSetSeq{\i}{\k}\}_{\k=1}^{\IndexSetSeqLast}$ such that the conditions given in \eqref{cond:h-connectivity:sf} hold. 
	As all terms of the last equation in \eqref{prf_poisson_unique_01} are nonnegative, we have 
	\begin{align}
		\sum_{i,j=1}^{\NFlT{\k}} \CofPositiveDifinite_{\i}\CofPositiveDifinite_{\j} \MatrixPoisson_{\i\j}
		& \geq \sum_{k=1}^{\IndexSetSeqLast-1}\frac{\BrS{\Pv{\IndexSetSeq{\i}{\k+1}} \CofPositiveDifinite_{\IndexSetSeq{\i}{\k}}-\Pv{\IndexSetSeq{\i}{\k}}\CofPositiveDifinite_{\IndexSetSeq{\i}{\k+1}}}^2}{\Pv{\IndexSetSeq{\i}{\k}}\Pv{\IndexSetSeq{\i}{\k+1}}}\MatrixComp{\IndexSetSeq{\i}{\k}}{\IndexSetSeq{\i}{\k+1}}+\frac{\Pv{\IndexSetSeq{\i}{\IndexSetSeqLast}} }{\Pv{\IndexSetSeq{\i}{\IndexSetSeqLast-1}}}\CofPositiveDifinite_{\IndexSetSeq{\i}{\IndexSetSeqLast}}^2 \MatrixComp{\IndexSetSeq{\i}{\IndexSetSeqLast-1}}{ \IndexSetSeq{\i}{\IndexSetSeqLast}}. 
		\label{proof:unieque:positive_definite_subseq}
	\end{align}
	As $|\PtT{\IndexSetSeq{\i}{\k}}{\k}-\PtT{\IndexSetSeq{\i}{\k+1}}{\k}|<\WCofSupp\h$, the value of $\MatrixComp{\IndexSetSeq{\i}{\k}}{\IndexSetSeq{\i}{\k+1}}\,(\k=1,2,\dots,\IndexSetSeqLast-1)$ is positive. 
	Thus, if the right side of \eqref{proof:unieque:positive_definite_subseq} equals zero, then $\CofPositiveDifinite_{\IndexSetSeq{\i}{\k}}=0\,(\k=1,2,\dots,\IndexSetSeqLast)$. 
	As this is inconsistent with $\CofPositiveDifinite_{\i}=\CofPositiveDifinite_{\IndexSetSeq{\i}{1}}\neq 0$, the right side of \eqref{proof:unieque:positive_definite_subseq} is positive. 
	Therefore, the matrix $\MatrixPoisson$ is a positive definite matrix. 
	Consequently, the matrix $\MatrixPoisson$ is non-singular. 
\end{proof}

\subsection{Discrete Sobolev norms and their mathematical properties}
\label{subsec:Discrete_Sobolev_norms}
Next, we introduce some notation and show certain lemmas. 
For $\Lambda\subset\IndexSetAll$, $m=1,2,\dots,\Dim$, and $\k=0,1,\dots,\N$, we define a discrete inner product $\InnerProdISPH{\cdot}{\cdot}{\Lambda}:\FsSolAppT{\k}(\Lambda)^m\times\FsSolAppT{\k}(\Lambda)^m\ra\dR$ and discrete $L^2$ norm $\NormLISPH{\cdot}{2}{\Lambda}:\FsSolAppT{\k}(\Lambda)^m\ra\dR$ as
\begin{align}
	\InnerProdISPH{\phi}{\varphi}{\Lambda}&\deq \sum_{\i\in\Lambda}\Pvi\,\phi_{\i}\cdot\varphi_{\i},\\
	\NormLISPH{\phi}{2}{\Lambda}&\deq \InnerProdISPH{\phi}{\phi}{\Lambda}^{1/2}=\BrS{\sum_{\i\in\Lambda}\Pvi\,\phi_{\i}^2}^{1/2},
\end{align}
respectively. 
Moreover, we define a discrete $H^1_0$ semi-norm $\NormHzTISPH{\cdot}{\k}{\Lambda}:\FsSolAppT{\k}(\Lambda)^m\ra\dR$ and discrete $H^{-1}_0$ semi-norm $\NormHzInvTISPH{\cdot}{\k}{\Lambda}:\FsSolAppT{\k}(\Lambda)^m\ra\dR$ as
\begin{align}
	\NormHzTISPH{\phi}{\k}{\Lambda}&\deq\BrS{\sum_{\i\in\Lambda} \Pv{\i} \sum_{\j\in\Lambda\setminus\{\i\}}\Pv{\j} \frac{|\phi_{\i}-\phi_{\j}|^2}{|\PtT{\i}{\k}-\PtT{\j}{\k}|}\BrANd{\whd(|\PtT{\i}{\k}-\PtT{\j}{\k}|)}}^{1/2},  
	\label{def:disc_H1_seminorm}\\
	\NormHzInvTISPH{\phi}{\k}{\Lambda} &\deq\sup_{\varphi\in\FsSolAppT{\k}(\Lambda)^m\setminus\{0\}}\dfrac{\InnerProdISPH{\phi}{\varphi}{\Lambda}}{\NormHzTISPH{\varphi}{\k}{\Lambda}},
	\label{def:disc_H-1_seminorm}
\end{align}
respectively. 
Then, we obtain the following lemmas:

\begin{lemma}
	\label{lem:propaties_disc_norm:01}
	For $\phi, \varphi\in\FsSolAppT{\k}(\Lambda)^\Dim\,(\Lambda\subset\IndexSetAll,\, \k=0,1,\dots,\TStepMax)$, we have 
	\begin{equation}
		\InnerProdISPH{\phi}{\varphi}{\Lambda}\leq\NormLISPH{\phi}{2}{\Lambda}\NormLISPH{\varphi}{2}{\Lambda}.
		\label{lem:ineq:innerprod:norm}
	\end{equation}
\end{lemma}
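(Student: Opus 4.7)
The plan is to recognize that the weighted pairing $\InnerProdISPH{\cdot}{\cdot}{\Lambda}$ is just the standard Euclidean inner product on $\dR^{\Dim}$ integrated against the positive measure $\{\Pvi\}_{\i\in\Lambda}$, so the desired estimate is nothing but Cauchy--Schwarz applied twice: once pointwise in $\dR^{\Dim}$, and once as the classical discrete Cauchy--Schwarz on weighted sums.

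Concretely, I would first use the Cauchy--Schwarz inequality in $\dR^{\Dim}$ to bound $\phi_{\i}\cdot\varphi_{\i}\leq |\phi_{\i}||\varphi_{\i}|$ for each $\i\in\Lambda$. Multiplying by the positive weight $\Pvi$ and summing yields
\begin{equation*}
\InnerProdISPH{\phi}{\varphi}{\Lambda}=\sum_{\i\in\Lambda}\Pvi\,\phi_{\i}\cdot\varphi_{\i}\leq \sum_{\i\in\Lambda}\bigl(\sqrt{\Pvi}\,|\phi_{\i}|\bigr)\bigl(\sqrt{\Pvi}\,|\varphi_{\i}|\bigr).
\end{equation*}
Then I would apply the classical discrete Cauchy--Schwarz inequality to the product on the right, giving
\begin{equation*}
\sum_{\i\in\Lambda}\bigl(\sqrt{\Pvi}\,|\phi_{\i}|\bigr)\bigl(\sqrt{\Pvi}\,|\varphi_{\i}|\bigr)\leq\BrS{\sum_{\i\in\Lambda}\Pvi\,|\phi_{\i}|^2}^{1/2}\BrS{\sum_{\i\in\Lambda}\Pvi\,|\varphi_{\i}|^2}^{1/2}=\NormLISPH{\phi}{2}{\Lambda}\NormLISPH{\varphi}{2}{\Lambda},
\end{equation*}
which is exactly \eqref{lem:ineq:innerprod:norm}.

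There is no real obstacle here: the positivity $\Pvi>0$ from the definition \eqref{def:PvSet} of $\PvSet$ is what guarantees $\sqrt{\Pvi}$ is well defined, and the Cauchy--Schwarz applications are both standard. The only thing to be careful about is to treat the vector-valued case ($m=\Dim$) correctly by factoring through $|\phi_{\i}|$, $|\varphi_{\i}|$ before using the scalar Cauchy--Schwarz on sums; equivalently one may simply regard $\{\sqrt{\Pvi}\,\phi_{\i}\}_{\i\in\Lambda}$ as an element of a $(|\Lambda|\cdot\Dim)$-dimensional Euclidean space and invoke Cauchy--Schwarz once there. Either formulation gives the stated inequality in one short argument.
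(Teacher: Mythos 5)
Your proof is correct and follows the same route as the paper, which simply invokes the Cauchy--Schwarz inequality from the appendix; you have merely filled in the two-stage detail (pointwise in $\dRd$, then on the weighted sum) that the paper leaves implicit. No issues.
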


\begin{proof}
	The Cauchy--Schwarz inequality (see Appendix \ref{appendix:math_tool}) immediately yields inequality \eqref{lem:ineq:innerprod:norm}. 
\end{proof}

\begin{lemma}
	\label{lem:propaties_disc_norm:02}
	Assume the particle distribution $\PtSetT{\k}\,(\k=0,1,\dots,\TStepMax)$ satisfies the $\h$-connectivity condition and $\varphi\in\FsSolAppT{\k}(\IndexSetAll)^\Dim\,(\k=0,1,\dots,\TStepMax)$ satisfies $\varphi_\i=0$ for $\i\in\IndexSetWlT{\k}$. 
	Then, we have, for $\phi\in\FsSolAppT{\k}(\IndexSetAll)^\Dim\,(\k=0,1,\dots,\TStepMax)$, 
	\begin{equation}
		\InnerProdISPH{\phi}{\varphi}{\IndexSetFlT{\k}\cup\IndexSetSfT{\k}}=\InnerProdISPH{\phi}{\varphi}{\IndexSetAll}\leq\NormHzInvTISPH{\phi}{\k}{\IndexSetAll}\NormHzTISPH{\varphi}{\k}{\IndexSetAll}. 
		\label{lem:ineq:innerprod:seminorm}
	\end{equation}
\end{lemma}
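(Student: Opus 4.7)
The proof naturally splits into the equality and the inequality. For the equality $\InnerProdISPH{\phi}{\varphi}{\IndexSetFlT{\k}\cup\IndexSetSfT{\k}}=\InnerProdISPH{\phi}{\varphi}{\IndexSetAll}$, the plan is simply to expand the inner product as $\sum_{\i}\Pvi\,\phi_\i\cdot\varphi_\i$ and to observe that the extra terms in the sum over $\IndexSetAll$, namely those with $\i\in\IndexSetWlT{\k}$, contribute zero because of the assumption $\varphi_\i=0$ on $\IndexSetWlT{\k}$. This part is immediate and requires no connectivity hypothesis.

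For the inequality, the natural approach is to apply the supremum definition \eqref{def:disc_H-1_seminorm} of $\NormHzInvTISPH{\cdot}{\k}{\IndexSetAll}$ directly. Provided $\NormHzTISPH{\varphi}{\k}{\IndexSetAll}>0$, substituting $\varphi$ into the supremum yields
\[
\frac{\InnerProdISPH{\phi}{\varphi}{\IndexSetAll}}{\NormHzTISPH{\varphi}{\k}{\IndexSetAll}}\leq \NormHzInvTISPH{\phi}{\k}{\IndexSetAll},
\]
which after clearing the denominator gives the claimed bound. The entire content of the lemma beyond definitional unwrapping therefore reduces to justifying the degenerate case $\NormHzTISPH{\varphi}{\k}{\IndexSetAll}=0$.

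The main obstacle, and the place where the $\h$-connectivity hypothesis enters, is this degenerate case: I need to show that $\NormHzTISPH{\varphi}{\k}{\IndexSetAll}=0$ together with $\varphi_\i=0$ on $\IndexSetWlT{\k}$ forces $\InnerProdISPH{\phi}{\varphi}{\IndexSetAll}=0$, so that the inequality still holds (with the convention that the right-hand side is $0$). The plan is to note that by \eqref{w_cond:monotonically_decreasing} each factor $\BrA{\whd(|\PtT{\i}{\k}-\PtT{\j}{\k}|)}$ is strictly positive for $0<|\PtT{\i}{\k}-\PtT{\j}{\k}|<\WCofSupp\h$, so the vanishing of the sum in \eqref{def:disc_H1_seminorm} implies $\varphi_\i=\varphi_\j$ for every such pair. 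Walking along the sequence $\{\IndexSetSeq{\i}{\l}\}_{\l=1}^{\IndexSetSeqLast}$ from Definition~\ref{def_connectivity} that links a fluid particle to a wall particle transports the value $0$ from the wall end back to any fluid index, and a similar walk (or the same, using $\{\IndexSetSeqAst{\i}{\l}\}$) propagates $0$ to surface indices that are reachable. Hence $\varphi\equiv 0$ on $\IndexSetFlT{\k}\cup\IndexSetSfT{\k}$, and combined with $\varphi=0$ on $\IndexSetWlT{\k}$ gives $\InnerProdISPH{\phi}{\varphi}{\IndexSetAll}=0$.

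Putting these three pieces together yields the lemma. The first part is bookkeeping, the second part is the supremum definition, and only the third part uses nontrivial structure, namely the strict positivity of $\WeightDerive$ from \eqref{w_cond:monotonically_decreasing} and the graph-theoretic content of the $\h$-connectivity condition.
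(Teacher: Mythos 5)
Your proposal is correct and follows essentially the same route as the paper: the equality by dropping the wall terms, the nondegenerate case by the supremum definition \eqref{def:disc_H-1_seminorm}, and the degenerate case by using strict positivity of $|\whd|$ within range together with a connectivity walk to a wall particle to force $\varphi\equiv 0$. The only slip is notational: in Definition~\ref{def_connectivity} it is the starred sequence $\{\IndexSetSeqAst{\i}{\l}\}$ that terminates at a wall particle (the unstarred one ends at a surface particle), which is the one needed to transport the zero value.
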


\begin{proof}
	We first show the norm property:  $\varphi=0~\Lra~\NormHzTISPH{\varphi}{\k}{\IndexSetAll}=0$. 
	From the definition of discrete $H^1$ semi-norm \eqref{def:disc_H1_seminorm}, it is obvious that $\varphi=0~\Ra~\NormHzTISPH{\varphi}{\k}{\IndexSetAll}=0$. 
	Assume $\NormHzTISPH{\varphi}{\k}{\IndexSetAll}=0$. 
	As the particle distribution $\PtSetT{\k}$ satisfies the $\h$-connectivity condition, for any $\i\in\IndexSetFlT{\k}\cup\IndexSetSfT{\k}$, we can take a sequence $\{\IndexSetSeqAst{\i}{\k}\}_{\k=1}^{\IndexSetSeqLastAst}$ such that 
	\begin{equation}
		\IndexSetSeqAst{\i}{1}=\i,
		\qquad 
		0<|\PtT{\IndexSetSeqAst{\i}{\l}}{\k}-\PtT{\IndexSetSeqAst{\i}{\l+1}}{\k}|<\WCofSupp\h\,(1\leq\l<\IndexSetSeqLastAst),
		\qquad
		\IndexSetSeqAst{\i}{\IndexSetSeqLastAst}\in\IndexSetWlT{\k}. 
	\end{equation}
	Therefore, we have, for $\i\in\IndexSetFlT{\k}\cup\IndexSetSfT{\k}$,
	\begin{align}
		\NormHzTISPH{\varphi}{\k}{\IndexSetAll}^2
		&=\sum_{\i\in\IndexSetAll} \Pv{\i} \sum_{\j\in\IndexSetAll\setminus\{\i\}}\Pv{\j} \frac{|\varphi_{\i}-\varphi_{\j}|^2}{|\PtT{\i}{\k}-\PtT{\j}{\k}|}\BrANd{\whd(|\PtT{\i}{\k}-\PtT{\j}{\k}|)}
		\nn\\
		&\geq\sum_{\l=1}^{\IndexSetSeqLastAst-1}\Pv{\IndexSetSeqAst{\i}{\l}}\Pv{\IndexSetSeqAst{\i}{\l+1}} \frac{|\varphi_{\IndexSetSeqAst{\i}{\l}}-\varphi_{\IndexSetSeqAst{\i}{\l+1}}|^2}{|\PtT{\IndexSetSeqAst{\i}{\l}}{\k}-\PtT{\IndexSetSeqAst{\i}{\l+1}}{\k}|}\BrANd{\whd(|\PtT{\IndexSetSeqAst{\i}{\l}}{\k}-\PtT{\IndexSetSeqAst{\i}{\l+1}}{\k}|)}\geq 0. 
	\end{align}
	Then, because $\NormHzTISPH{\varphi}{\k}{\IndexSetAll}=0$ and $\Pv{\IndexSetSeqAst{\i}{\l}}\Pv{\IndexSetSeqAst{\i}{\l+1}}|\PtT{\IndexSetSeqAst{\i}{\l}}{\k}-\PtT{\IndexSetSeqAst{\i}{\l+1}}{\k}|^{-1}\BrANd{\whd(|\PtT{\IndexSetSeqAst{\i}{\l}}{\k}-\PtT{\IndexSetSeqAst{\i}{\l+1}}{\k}|)}>0$, we have $|\varphi_{\IndexSetSeqAst{\i}{\l}}-\varphi_{\IndexSetSeqAst{\i}{\l+1}}|=0$ for $\l=1,2,\dots,\IndexSetSeqLastAst-1$. 
	Moreover, because $\varphi_{\i_{\IndexSetSeqLastAst}}=0$, we obtain $\varphi_{\i}=\varphi_{\i_{1}}=0$. 
	Because $\i\in\IndexSetFlT{\k}\cup\IndexSetSfT{\k}$ is arbitrary, we obtain $\varphi=0$. 
	
	Next, we show \eqref{lem:ineq:innerprod:seminorm}. 
	As $\varphi_\i=0$ for $\i\in\IndexSetWlT{\k}$, we have $\InnerProdISPH{\phi}{\varphi}{\IndexSetFlT{\k}\cup\IndexSetSfT{\k}}=\InnerProdISPH{\phi}{\varphi}{\IndexSetAll}$. 
	When $\NormHzTISPH{\varphi}{\k}{\IndexSetAll}=0$, the norm property, $\varphi=0~\Lra~\NormHzTISPH{\varphi}{\k}{\IndexSetAll}=0$, yields
	\begin{equation}
		\InnerProdISPH{\phi}{\varphi}{\IndexSetAll}=\NormHzInvTISPH{\phi}{\k}{\IndexSetAll}\NormHzTISPH{\varphi}{\k}{\IndexSetAll}=0. 
	\end{equation}
	When $\NormHzTISPH{\varphi}{\k}{\IndexSetAll}\neq0$, from the definition of discrete $H^{-1}$ semi-norm \eqref{def:disc_H-1_seminorm},  we obtain 
	\begin{align}
		\NormHzInvTISPH{\phi}{\k}{\Lambda}
		=\sup_{\widehat\varphi\in\FsSolAppT{\k}(\Lambda)^m\setminus\{0\}}\dfrac{\InnerProdISPH{\phi}{\widehat\varphi}{\Lambda}}{\NormHzTISPH{\widehat\varphi}{\k}{\Lambda}}\geq\dfrac{\InnerProdISPH{\phi}{\varphi}{\Lambda}}{\NormHzTISPH{\varphi}{\k}{\Lambda}}. 
	\end{align}
	Therefore, we conclude \eqref{lem:ineq:innerprod:seminorm}. 
\end{proof}

\begin{lemma}
	\label{lem:propaties_disc_norm:03}
	For $\phi\in\FsSolAppT{\k}(\Lambda)^\Dim$ and $\psi\in\FsSolAppT{\k}(\Lambda)\,(\Lambda\subset\IndexSetAll,\, \k=0,1,\dots,\TStepMax)$, we have 
	\begin{equation}
		\InnerProdISPH{\psi}{\DivISPH{\phi}{}{\k}}{\Lambda}
		=-\InnerProdISPH{\GradISPH{\psi}{}{\k}}{\phi}{\Lambda},
		\label{lem:propaties_disc_norm:integral_of_parts}
	\end{equation}
	\begin{equation}
		-\InnerProdISPH{\psi}{\LapISPH{\psi}{}{\k}}{\Lambda}=\NormHzTISPH{\psi}{\k}{\Lambda}^2.
		\label{lem:propaties_disc_norm:equiv_norm}
	\end{equation}
	Here, these approximations of derivatives are defined as
	\begin{align}
		\DivISPH{\phi}{}{\k}&\deq\sum_{\j\in\Lambda}\Pv{\j} \BrS{\phi_{\j}+\phi_{\i}}\cdot\Grad\wh(|\PtT{\i}{\k}-\PtT{\j}{\k}|),
		\label{def:DivISPH}\\
		\GradISPH{\psi}{}{\k}&\deq\sum_{\j\in\Lambda}\Pv{\j} \BrS{\psi_{\j}-\psi_{\i}}\Grad\wh(|\PtT{\i}{\k}-\PtT{\j}{\k}|),
		\label{def:GradISPH}\\
		\LapISPH{\psi}{}{\k}&\deq 2\sum_{\j\in\Lambda\setminus\{\i\}}\Pv{\j} \frac{\psi_{\i}-\psi_{\j}}{|\PtT{\i}{\k}-\PtT{\j}{\k}|}\frac{\PtT{\i}{\k}-\PtT{\j}{\k}}{|\PtT{\i}{\k}-\PtT{\j}{\k}|}\cdot\Grad\wh(|\PtT{\i}{\k}-\PtT{\j}{\k}|).  
		\label{def:LapISPH}
	\end{align}
\end{lemma}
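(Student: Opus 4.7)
The plan is to establish both identities by expanding each approximate derivative as a double sum over $\Lambda\times\Lambda$ and then exploiting the parity under the index swap $\i\leftrightarrow\j$. The crucial structural observation is that $\Grad\wh(|\PtT{\i}{\k}-\PtT{\j}{\k}|) = \whd(|\PtT{\i}{\k}-\PtT{\j}{\k}|)\,(\PtT{\i}{\k}-\PtT{\j}{\k})/|\PtT{\i}{\k}-\PtT{\j}{\k}|$ is antisymmetric under swapping $\i$ and $\j$, while the factor $\Pv{\i}\Pv{\j}$ and the distance $|\PtT{\i}{\k}-\PtT{\j}{\k}|$ are symmetric. This is the sole algebraic fact driving both formulas.

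For \eqref{lem:propaties_disc_norm:integral_of_parts}, I first write
\begin{equation}
\InnerProdISPH{\psi}{\DivISPH{\phi}{}{\k}}{\Lambda}
= \sum_{\i,\j\in\Lambda}\Pv{\i}\Pv{\j}\,\psi_{\i}\,(\phi_{\j}+\phi_{\i})\cdot\Grad\wh(|\PtT{\i}{\k}-\PtT{\j}{\k}|),
\end{equation}
and relabel $\i\leftrightarrow\j$ to obtain the same quantity with $\psi_{\i}$ replaced by $\psi_{\j}$ and $\Grad\wh$ replaced by its negative. Averaging the two expressions yields the symmetrized form $\frac12\sum_{\i,\j}\Pv{\i}\Pv{\j}(\psi_{\i}-\psi_{\j})(\phi_{\i}+\phi_{\j})\cdot\Grad\wh$. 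Doing the analogous expansion on $-\InnerProdISPH{\GradISPH{\psi}{}{\k}}{\phi}{\Lambda} = \sum_{\i,\j}\Pv{\i}\Pv{\j}(\psi_{\i}-\psi_{\j})\phi_{\i}\cdot\Grad\wh$ and symmetrizing the same way produces the identical expression. The equality \eqref{lem:propaties_disc_norm:integral_of_parts} then follows by identification.

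For \eqref{lem:propaties_disc_norm:equiv_norm}, I reduce $(\PtT{\i}{\k}-\PtT{\j}{\k})/|\PtT{\i}{\k}-\PtT{\j}{\k}|\cdot\Grad\wh(|\PtT{\i}{\k}-\PtT{\j}{\k}|) = \whd(|\PtT{\i}{\k}-\PtT{\j}{\k}|)$, which by condition \eqref{w_cond:monotonically_decreasing} equals $-|\whd(|\PtT{\i}{\k}-\PtT{\j}{\k}|)|$. Inserting this into the definition of $\LapISPH{\psi}{}{\k}_{\i}$ gives
\begin{equation}
-\InnerProdISPH{\psi}{\LapISPH{\psi}{}{\k}}{\Lambda}
= 2\sum_{\substack{\i,\j\in\Lambda\\ \i\neq\j}}\Pv{\i}\Pv{\j}\,\psi_{\i}(\psi_{\i}-\psi_{\j})\,\frac{|\whd(|\PtT{\i}{\k}-\PtT{\j}{\k}|)|}{|\PtT{\i}{\k}-\PtT{\j}{\k}|}.
\end{equation}
Here the coefficient $\Pv{\i}\Pv{\j}|\whd|/|\PtT{\i}{\k}-\PtT{\j}{\k}|$ is symmetric under $\i\leftrightarrow\j$, so relabeling and averaging converts the bilinear expression $\psi_{\i}(\psi_{\i}-\psi_{\j})$ into $\tfrac12(\psi_{\i}-\psi_{\j})^{2}$, which cancels the prefactor $2$ and reproduces exactly the square of the seminorm \eqref{def:disc_H1_seminorm}.

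There is no real obstacle here beyond careful bookkeeping; the only thing to watch is the sign arising from \eqref{w_cond:monotonically_decreasing} when passing from $\whd$ to $|\whd|$ in the Laplacian identity, and the choice of the ``reference'' gradient $\Grad\wh(|\PtT{\i}{\k}-\PtT{\j}{\k}|)$ (taken as the $x_{\i}$-derivative, so that it is odd in the swap) in the divergence identity. Both identities are, in effect, discrete integration-by-parts formulas that hold identically in $\psi, \phi$ without any assumption on the particle distribution, beyond $\w$ satisfying \eqref{w_cond:compact_support}--\eqref{w_cond:2nd_smoothness}.
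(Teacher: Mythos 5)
Your proof is correct and follows essentially the same route as the paper's: expand both sides as double sums over $\Lambda\times\Lambda$, exploit the antisymmetry of $\Grad\wh(|\PtT{\i}{\k}-\PtT{\j}{\k}|)$ (resp.\ the symmetry of the scalar coefficient $\Pv{\i}\Pv{\j}\,\whd(|\PtT{\i}{\k}-\PtT{\j}{\k}|)/|\PtT{\i}{\k}-\PtT{\j}{\k}|$) under the swap $\i\leftrightarrow\j$, and symmetrize so that $2\psi_{\i}(\psi_{\i}-\psi_{\j})$ polarizes to $(\psi_{\i}-\psi_{\j})^2$. If anything, your explicit tracking of the sign $\whd=-|\whd|$ via \eqref{w_cond:monotonically_decreasing} is cleaner than the paper's bookkeeping at the corresponding step.
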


\begin{proof}
	First, we prove \eqref{lem:propaties_disc_norm:integral_of_parts}. 
	As $\Grad\wh(|\PtT{\i}{\k}-\PtT{\j}{\k}|)=-\Grad\wh(|\PtT{\j}{\k}-\PtT{\i}{\k}|)$, we have
	\begin{align}
		\InnerProdISPH{\psi}{\DivISPH{\phi}{}{\k}}{\Lambda}
		&=\sum_{\i\in\Lambda}\Pv{\i}\psi_{\i}\sum_{\j\in\Lambda}\Pv{\j}\BrS{\phi_{\j}+\phi_{\i}}\cdot\Grad\wh(|\PtT{\i}{\k}-\PtT{\j}{\k}|)\nn\\
		&=\sum_{\i\in\Lambda}\sum_{\j\in\Lambda}\Pv{\i}\Pv{\j}\psi_{\i}\BrS{\phi_{\j}+\phi_{\i}}\cdot\Grad\wh(|\PtT{\i}{\k}-\PtT{\j}{\k}|)
		\nn\\
		&=\dfrac{1}{2}\sum_{\i\in\Lambda}\sum_{\j\in\Lambda}\Pv{\i}\Pv{\j}(\psi_{\i}-\psi_{\j})\BrS{\phi_{\j}+\phi_{\i}}\cdot\Grad\wh(|\PtT{\i}{\k}-\PtT{\j}{\k}|)
		\nn\\
		&=\sum_{\i\in\Lambda}\Pv{\i}\phi_{\i}\cdot\sum_{\j\in\Lambda}\Pv{\j}(\psi_{\i}-\psi_{\j})\Grad\wh(|\PtT{\i}{\k}-\PtT{\j}{\k}|)
		\nn\\
		&=-\InnerProdISPH{\GradISPH{\psi}{}{\k}}{\phi}{\Lambda}. 
	\end{align}
	Next, we prove \eqref{lem:propaties_disc_norm:equiv_norm}. 
	Let 
	\begin{equation}
		J_{\i\j}\deq
		\begin{cases}
			0,\quad&\i=\j,
			\\
			\ds\frac{\PtT{\i}{\k}-\PtT{\j}{\k}}{|\PtT{\i}{\k}-\PtT{\j}{\k}|^2}\cdot\Grad\wh(|\PtT{\i}{\k}-\PtT{\j}{\k}|),\quad&\i\neq\j. 
		\end{cases}
	\end{equation}
	Because $J_{\i\j}=-J_{\j\i}$, we obtain
	\begin{align}
		-\InnerProdISPH{\psi}{\LapISPH{\psi}{}{\k}}{\Lambda}
		&=2\sum_{\i\in\Lambda}\Pv{\i}\psi_{\i}\sum_{\j\in\Lambda}\Pv{\j}\BrS{\psi_{\i}-\psi_{\j}}J_{\i\j}
		\nn\\
		&=2\sum_{\i\in\Lambda}\sum_{\j\in\Lambda}\Pv{\i}\Pv{\j}\psi_{\i}\BrS{\psi_{\i}-\psi_{\j}}J_{\i\j}
		\nn\\
		&=\sum_{\i\in\Lambda}\sum_{\j\in\Lambda}\Pv{\i}\Pv{\j}\BrS{\psi_{\i}-\psi_{\j}}^2J_{\i\j}
		\nn\\
		&=\NormHzTISPH{\psi}{\k}{\Lambda}^2. 
	\end{align}
\end{proof}

\begin{lemma}
	\label{lem:inequarity_disc_norm}
	Assume that a family $\{(\{\PtSetT{\k}\}_{\k=1}^{\TStepMax}, \PvSet, \h, \Dt)\}$ satisfies the semi-regularity condition with $\ConstSemiReg$. 
	Then, for $\psi\in\FsSolAppT{\k}(\Lambda)\,(\Lambda\subset\IndexSetAll,\, \k=0,1,\dots,\TStepMax)$, we have
	\begin{equation}
		\NormLISPH{\GradISPH{\psi}{}{\k}}{2}{\Lambda}^2
		\leq(\Dim+\ConstSemiReg\Dt)\NormHzTISPH{\psi}{\k}{\Lambda}^2. 
		\label{eq:lem:inequarity_disc_norm}
	\end{equation}
	Here, $\GradISPH{\psi}{}{\k}$ is defined as in \eqref{def:GradISPH}. 
\end{lemma}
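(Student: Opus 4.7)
The plan is to estimate $|\GradISPH{\psi}{}{\k}_{\i}|^2$ pointwise via Cauchy--Schwarz, in such a way that one factor reproduces the integrand of $\NormHzTISPH{\psi}{\k}{\Lambda}^{2}$ while the other matches the quantity bounded by the semi-regularity condition. First I would rewrite $\Grad\wh(|\PtT{\i}{\k}-\PtT{\j}{\k}|)=\whd(|\PtT{\i}{\k}-\PtT{\j}{\k}|)\dfrac{\PtT{\i}{\k}-\PtT{\j}{\k}}{|\PtT{\i}{\k}-\PtT{\j}{\k}|}$ so that, since the unit vector has Euclidean norm $1$ and the $\j=\i$ term vanishes by \eqref{w_cond:monotonically_decreasing}, the triangle inequality yields
\begin{equation*}
	\BrA{\GradISPH{\psi}{}{\k}_{\i}}\leq \sum_{\j\in\Lambda\setminus\{\i\}}\Pv{\j}\,|\psi_{\j}-\psi_{\i}|\,\BrA{\whd(|\PtT{\i}{\k}-\PtT{\j}{\k}|)}.
\end{equation*}

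Next I would split $\BrA{\whd(|\PtT{\i}{\k}-\PtT{\j}{\k}|)}=\dfrac{\BrA{\whd(|\PtT{\i}{\k}-\PtT{\j}{\k}|)}^{1/2}}{|\PtT{\i}{\k}-\PtT{\j}{\k}|^{1/2}}\cdot |\PtT{\i}{\k}-\PtT{\j}{\k}|^{1/2}\BrA{\whd(|\PtT{\i}{\k}-\PtT{\j}{\k}|)}^{1/2}$ and apply the Cauchy--Schwarz inequality to the sum over $\j$, distributing $\Pv{\j}$ evenly between the two factors. This produces
\begin{equation*}
	\BrA{\GradISPH{\psi}{}{\k}_{\i}}^{2}\leq \Biggl(\sum_{\j\in\Lambda\setminus\{\i\}}\Pv{\j}\dfrac{|\psi_{\i}-\psi_{\j}|^{2}}{|\PtT{\i}{\k}-\PtT{\j}{\k}|}\BrA{\whd(|\PtT{\i}{\k}-\PtT{\j}{\k}|)}\Biggr)\Biggl(\sum_{\j\in\Lambda\setminus\{\i\}}\Pv{\j}\,\BrA{\PtT{\i}{\k}-\PtT{\j}{\k}}\BrA{\whd(|\PtT{\i}{\k}-\PtT{\j}{\k}|)}\Biggr).
\end{equation*}
The second factor is independent of $\psi$ and, after extending the sum to all $\j\in\{1,\dots,\N\}$ (which only enlarges it), is bounded by $\Dim+\ConstSemiReg\Dt$ uniformly in $\i$ by the semi-regularity condition \eqref{cond:semi-regular}.

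Multiplying by $\Pv{\i}$, summing over $\i\in\Lambda$, and pulling the uniform bound $\Dim+\ConstSemiReg\Dt$ out of the sum reproduces exactly $(\Dim+\ConstSemiReg\Dt)\NormHzTISPH{\psi}{\k}{\Lambda}^{2}$ by the definition \eqref{def:disc_H1_seminorm}, yielding \eqref{eq:lem:inequarity_disc_norm}. The only delicate step is arranging the Cauchy--Schwarz split so that the $|\PtT{\i}{\k}-\PtT{\j}{\k}|$ factor ends up in the numerator of the semi-regularity sum and in the denominator of the $H^{1}_{0}$ seminorm sum; once the weights $\Pv{\j}$ and $|\whd|$ are allocated symmetrically this is immediate, and no condition beyond semi-regularity is required.
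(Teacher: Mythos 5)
Your proposal is correct and follows essentially the same route as the paper's proof: rewrite $\Grad\wh$ in terms of $\whd$ and the unit vector, bound by the triangle inequality, apply Cauchy--Schwarz with the weights $\Pv{\j}$ and the factor $|\PtT{\i}{\k}-\PtT{\j}{\k}|$ split so that one factor is the $H^1_0$ seminorm summand and the other is controlled by the semi-regularity condition, then sum over $\i$. The only cosmetic difference is that you make the Cauchy--Schwarz splitting explicit where the paper leaves it implicit.
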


\begin{proof}
	From 
	\begin{equation}
		\Grad\wh(|\PtT{\i}{\k}-\PtT{\j}{\k}|)=\dfrac{\PtT{\i}{\k}-\PtT{\j}{\k}}{|\PtT{\i}{\k}-\PtT{\j}{\k}|}\whd(|\PtT{\i}{\k}-\PtT{\j}{\k}|)
	\end{equation}
	and the Cauchy--Schwarz inequality (see Appendix \ref{appendix:math_tool}), we have 
	\begin{align}
		\NormLISPH{\GradISPH{\psi}{}{\k}}{2}{\Lambda}^2&= \sum_{\i\in\Lambda}\Pvi\BrS{\sum_{\j\in\Lambda}\Pv{\j} \BrS{\psi_{\j}-\psi_{\i}}\Grad\wh(|\PtT{\i}{\k}-\PtT{\j}{\k}|)}^2
		\nn\\
		&=\sum_{\i\in\Lambda}\Pvi\BrS{-\sum_{\j\in\Lambda}\Pv{\j} \BrS{\psi_{\j}-\psi_{\i}}\dfrac{\PtT{\i}{\k}-\PtT{\j}{\k}}{|\PtT{\i}{\k}-\PtT{\j}{\k}|}\BrANd{\whd(|\PtT{\i}{\k}-\PtT{\j}{\k}|)}}^2
		\nn\\
		&\leq\sum_{\i\in\Lambda}\Pvi\BrS{\sum_{\j\in\Lambda}\Pv{\j}|\psi_{\j}-\psi_{\i}|\BrANd{\whd(|\PtT{\i}{\k}-\PtT{\j}{\k}|)}}^2
		\nn\\
		&\leq\sum_{\i\in\Lambda}\Pvi\BrS{\sum_{\j\in\Lambda}\Pv{\j}\frac{|\psi_{\j}-\psi_{\i}|^2}{|\PtT{\i}{\k}-\PtT{\j}{\k}|}\BrANd{\whd(|\PtT{\i}{\k}-\PtT{\j}{\k}|)}}\BrS{\sum_{\j=1}^\N\Pv{\j}|\PtT{\i}{\k}-\PtT{\j}{\k}|\BrANd{\whd(|\PtT{\i}{\k}-\PtT{\j}{\k}|)}}. 
	\end{align}
	As the family $\{(\{\PtSetT{\k}\}_{\k=1}^{\TStepMax}, \PvSet, \h, \Dt)\}$ satisfies the semi-regularity condition \eqref{cond:semi-regular}, we obtain \eqref{eq:lem:inequarity_disc_norm}. 
\end{proof}

\begin{lemma}
	\label{lem:ineq:|Lap|}
	Assume that a family $\{(\{\PtSetT{\k}\}_{\k=1}^{\TStepMax}, \PvSet, \h, \Dt)\}$ satisfies the time step condition with $\ConstTStepCond$. 
	Then, for $\phi\in\FsSolAppT{\k}(\Lambda)^\Dim\,(\Lambda\subset\IndexSetAll,\, \k=0,1,\dots,\TStepMax)$, we have
	\begin{equation}
		\NormLISPH{\LapISPH{\phi}{}{\k}}{2}{\Lambda}^2\leq\dfrac{2\ConstTStepCond}{\Dt\KVisc}\NormHzTISPH{\phi}{\k}{\Lambda}^2,\qquad\TStep=0,1,\dots,\TStepMax. 
		\label{ineq:|Lap|}
	\end{equation}
	Here, the definition of $\LapISPH{\phi}{}{\k}$ is analogous to that given for $\LapISPH{\psi}{}{\k}$ in \eqref{def:LapISPH}, with the vector function $\psi$ replaced by the scalar function $\phi$. 
\end{lemma}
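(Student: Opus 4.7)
The plan is to reduce the bound to a pointwise inequality on the integrand of $\NormLISPH{\LapISPH{\phi}{}{\k}}{2}{\Lambda}^2$. First, using the identity
\begin{equation}
\Grad\wh(|\PtT{\i}{\k}-\PtT{\j}{\k}|)=\dfrac{\PtT{\i}{\k}-\PtT{\j}{\k}}{|\PtT{\i}{\k}-\PtT{\j}{\k}|}\whd(|\PtT{\i}{\k}-\PtT{\j}{\k}|)
\end{equation}
already exploited in Lemma \ref{lem:inequarity_disc_norm}, the inner dot product in the definition of $\LapISPH{\phi}{}{\k}$ collapses, and the discrete Laplacian simplifies to a sum in which each summand carries a factor $\whd(|\PtT{\i}{\k}-\PtT{\j}{\k}|)/|\PtT{\i}{\k}-\PtT{\j}{\k}|$. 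Since $\whd\leq 0$ by \eqref{w_cond:monotonically_decreasing}, one may replace $\whd$ by $-|\whd|$ without issue.

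Next I would apply the Cauchy--Schwarz inequality factor by factor in $\j$, writing
\begin{equation}
\Pv{\j}\frac{|\whd|}{|\PtT{\i}{\k}-\PtT{\j}{\k}|}|\phi_{\i}-\phi_{\j}|=\Bigl(\Pv{\j}\frac{|\whd|\,|\phi_{\i}-\phi_{\j}|^{2}}{|\PtT{\i}{\k}-\PtT{\j}{\k}|}\Bigr)^{\!1/2}\Bigl(\Pv{\j}\frac{|\whd|}{|\PtT{\i}{\k}-\PtT{\j}{\k}|}\Bigr)^{\!1/2}.
\end{equation}
Squaring the Laplacian and accounting for the leading factor of $2$ then yields the pointwise bound
\begin{equation}
|\LapISPH{\phi}{}{\k}|^{2}\leq 4\Bigl(\sum_{\j\in\Lambda\setminus\{\i\}}\Pv{\j}\frac{|\whd|\,|\phi_{\i}-\phi_{\j}|^{2}}{|\PtT{\i}{\k}-\PtT{\j}{\k}|}\Bigr)\Bigl(\sum_{\j\in\Lambda\setminus\{\i\}}\Pv{\j}\frac{|\whd|}{|\PtT{\i}{\k}-\PtT{\j}{\k}|}\Bigr).
\end{equation}

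The time step condition \eqref{cond:time-step} is designed precisely to control the second factor: it is bounded, uniformly in $\i$, by $\ConstTStepCond/(2\KVisc\Dt)$. Substituting this bound, multiplying by $\Pv{\i}$ and summing over $\i\in\Lambda$, the remaining double sum is exactly $\NormHzTISPH{\phi}{\k}{\Lambda}^{2}$ as defined in \eqref{def:disc_H1_seminorm}, and the numerical constants $4\cdot(1/2)=2$ combine to give the target factor $2\ConstTStepCond/(\Dt\KVisc)$. I do not expect any real obstacle: the proof is essentially the same Cauchy--Schwarz pattern as in Lemma \ref{lem:inequarity_disc_norm}, with the time step condition replacing the semi-regularity condition as the uniform bound on the auxiliary sum. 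The only point that requires some care is handling the vector-valuedness of $\phi$, but since the Cauchy--Schwarz step passes through $|\phi_{\i}-\phi_{\j}|$ and the definition of $\NormHzTISPH{\cdot}{\k}{\Lambda}$ already uses this Euclidean norm, the scalar and vector cases are identical.
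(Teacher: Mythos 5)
Your proposal is correct and follows essentially the same route as the paper's proof: collapse the dot product via $\Grad\wh(|\PtT{\i}{\k}-\PtT{\j}{\k}|)=\frac{\PtT{\i}{\k}-\PtT{\j}{\k}}{|\PtT{\i}{\k}-\PtT{\j}{\k}|}\whd(|\PtT{\i}{\k}-\PtT{\j}{\k}|)$, apply the same Cauchy--Schwarz split of each summand, and bound the auxiliary sum uniformly in $\i$ by $\ConstTStepCond/(2\Dt\KVisc)$ using the time step condition. The constant bookkeeping $4\cdot\tfrac{1}{2}=2$ and the remark on vector-valuedness match the paper's argument exactly.
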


\begin{proof}
	From the Cauchy--Schwarz inequality (see Appendix \ref{appendix:math_tool}), we have 
	\begin{align}
		\NormLISPH{\LapISPH{\phi}{}{\k}}{2}{\Lambda}^2
		&=\sum_{\i\in\Lambda}\Pv{\i}\BrS{2\sum_{\j\in\Lambda\setminus\{\i\}}\Pv{\j}\frac{\phi_{\i}-\phi_{\j}}{|\PtT{\i}{\k}-\PtT{\j}{\k}|}\frac{\PtT{\i}{\k}-\PtT{\j}{\k}}{|\PtT{\i}{\k}-\PtT{\j}{\k}|}\cdot\Grad\wh(|\PtT{\i}{\k}-\PtT{\j}{\k}|)}^2
		\nn\\
		&=4\sum_{\i\in\Lambda}\Pv{\i}\BrS{\sum_{\j\in\Lambda\setminus\{\i\}}\Pv{\j}\frac{\phi_{\i}-\phi_{\j}}{|\PtT{\i}{\k}-\PtT{\j}{\k}|}\BrANd{\whd(|\PtT{\i}{\k}-\PtT{\j}{\k}|)}}^2
		\nn\\
		&\leq 4\sum_{\i\in\Lambda}\Pv{\i}\BrS{\sum_{\j\in\Lambda\setminus\{\i\}}\Pv{\j}\frac{|\phi_{\i}-\phi_{\j}|^2}{|\PtT{\i}{\k}-\PtT{\j}{\k}|}\BrANd{\whd(|\PtT{\i}{\k}-\PtT{\j}{\k}|)}}\BrS{\sum_{\j\in\Lambda\setminus\{\i\}}\Pv{\j}\frac{\BrANd{\whd(|\PtT{\i}{\k}-\PtT{\j}{\k}|)}}{|\PtT{\i}{\k}-\PtT{\j}{\k}|}}
		\nn\\
		&\leq 4\max_{\i=1,2,\dots,\N}\BrS{\sum_{\j\in\Lambda\setminus\{\i\}}\Pv{\j}\frac{\BrANd{\whd(|\PtT{\i}{\k}-\PtT{\j}{\k}|)}}{|\PtT{\i}{\k}-\PtT{\j}{\k}|}}\NormHzTISPH{\phi}{\k}{\Lambda}^2. 
	\end{align}
	As the family $\{(\{\PtSetT{\k}\}_{\k=1}^{\TStepMax}, \PvSet, \h, \Dt)\}$ satisfies the time step condition \eqref{cond:time-step}, we obtain \eqref{ineq:|Lap|}. 
\end{proof}

\subsection{Stability for the implicit scheme}
\label{subsec:stab_implicit_scheme}
From the lemmas in the previous section, we obtain the following stability of velocity in two-dimensional space for the implicit scheme in the ISPH method.  
\begin{theorem}
	\label{thm:stab_im}
	Let $\Dim=2$. 
	Let $(\VelApp{}{\k+1},\PresApp{}{\k+1})$ be the solution of the implicit scheme in the ISPH method. 
	Assume a family $\{(\{\PtSetT{\k}\}_{\k=1}^{\TStepMax}, \PvSet, \h, \Dt)\}$ that satisfies the semi-regularity condition with $\ConstSemiReg$ and whose particle distribution $\PtSetT{\k}$ satisfies the $\h$-connectivity condition. 
	Then, there exists a positive constant $\gconst$ dependent only on $\TimeMax$, $\KVisc$, and $\ConstSemiReg$ such that
	\begin{equation}
		\NormLISPH{\VelApp{}{\k+1}}{2}{\IndexSetAll}^2
		\leq\gconst\BrS{\NormLISPH{\VelIni}{2}{\IndexSetAll}^2+\sum_{\l=0}^\k\Dt\NormHzInvTISPH{\DmFunc_{}^{\l}}{\l}{\IndexSetAll}^2},\qquad \k=0,1,\dots,\TStepMax-1. 
		\label{thm:stab_im:ineq}
	\end{equation}
\end{theorem}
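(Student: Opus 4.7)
The plan is to combine a discrete energy estimate for the prediction step with a near-orthogonality estimate for the correction step, and conclude via discrete Gronwall. First, I would test the prediction step \eqref{implicit:prediction} with $\VelPred{}{\k+1}$ in the inner product $\InnerProdISPH{\cdot}{\cdot}{\IndexSetAll}$; the wall contributions vanish since $\VelPred{\i}{\k+1}=0$ for $\i\in\IndexSetWlT{\k}$. Using the algebraic identity $2a\cdot(a-b)=|a|^2-|b|^2+|a-b|^2$ on the discrete time derivative, Lemma~\ref{lem:propaties_disc_norm:03} (equation~\eqref{lem:propaties_disc_norm:equiv_norm}) to turn the Laplacian term into $\NormHzTISPH{\VelPred{}{\k+1}}{\k}{\IndexSetAll}^2$, and Lemma~\ref{lem:propaties_disc_norm:02} together with Young's inequality on the forcing, I get (after dropping the nonnegative $\|\VelPred{}{\k+1}-\VelApp{}{\k}\|^2$ term)
\[
\NormLISPH{\VelPred{}{\k+1}}{2}{\IndexSetAll}^2 + \Dt\,\KVisc\,\NormHzTISPH{\VelPred{}{\k+1}}{\k}{\IndexSetAll}^2 \le \NormLISPH{\VelApp{}{\k}}{2}{\IndexSetAll}^2 + \frac{\Dt}{\KVisc}\NormHzInvTISPH{\DmFunc^{\k}}{\k}{\IndexSetAll}^2.
\]

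Next I would turn to the correction step \eqref{implicit:correction}. Since $\VelApp{}{\k+1}$ and $\VelPred{}{\k+1}$ agree (both vanish) on walls, the $L^2$-expansion reads
\[
\NormLISPH{\VelApp{}{\k+1}}{2}{\IndexSetAll}^2 = \NormLISPH{\VelPred{}{\k+1}}{2}{\IndexSetAll}^2 - \frac{2\Dt}{\Dens}\InnerProdISPH{\VelPred{}{\k+1}}{\PresAppGrad{}{\k+1}}{\IndexSetFlT{\k}\cup\IndexSetSfT{\k}} + \frac{\Dt^2}{\Dens^2}\NormLISPH{\PresAppGrad{}{\k+1}}{2}{\IndexSetFlT{\k}\cup\IndexSetSfT{\k}}^2.
\]
For the cross term I apply the integration-by-parts identity~\eqref{lem:propaties_disc_norm:integral_of_parts} of Lemma~\ref{lem:propaties_disc_norm:03} with $\Lambda=\IndexSetFlT{\k}\cup\IndexSetSfT{\k}$, use $\PresApp{\i}{\k+1}=0$ on $\IndexSetSfT{\k}$, invoke \eqref{implicit:Poisson} to replace the resulting discrete divergence by $(\Dt/\Dens)\,\PresAppLap{}{\k+1}$, and then use \eqref{lem:propaties_disc_norm:equiv_norm} of Lemma~\ref{lem:propaties_disc_norm:03} again on the pressure Laplacian. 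The cross term thus contributes $-2\Dt^2/\Dens^2\cdot\NormHzTISPH{\PresApp{}{\k+1}}{\k}{\cdot}^2$. For the squared-gradient term I apply Lemma~\ref{lem:inequarity_disc_norm}, which produces the factor $\Dim+\ConstSemiReg\Dt=2+\ConstSemiReg\Dt$ in two space dimensions. The leading coefficients $-2$ and $+2$ cancel exactly, leaving only an $O(\Dt^3)$ residual proportional to $\ConstSemiReg\Dt\cdot\NormHzTISPH{\PresApp{}{\k+1}}{\k}{\cdot}^2$. To close this residual I bound $\NormHzTISPH{\PresApp{}{\k+1}}{\k}{\cdot}^2$ by testing \eqref{implicit:Poisson} against $\PresApp{}{\k+1}$, integrating by parts once more, and applying Lemmas~\ref{lem:propaties_disc_norm:02} and~\ref{lem:inequarity_disc_norm} to obtain a bound of the form $(\Dens/\Dt)^2(\Dim+\ConstSemiReg\Dt)\NormLISPH{\VelPred{}{\k+1}}{2}{\IndexSetAll}^2$. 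This yields the near-non-expansion bound $\NormLISPH{\VelApp{}{\k+1}}{2}{\IndexSetAll}^2\le (1+C\Dt)\NormLISPH{\VelPred{}{\k+1}}{2}{\IndexSetAll}^2$ with $C$ depending only on $\ConstSemiReg$ and $\Dens$.

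Chaining the two estimates produces a recursion of the form
\[
\NormLISPH{\VelApp{}{\k+1}}{2}{\IndexSetAll}^2 \le (1+C\Dt)\!\left(\NormLISPH{\VelApp{}{\k}}{2}{\IndexSetAll}^2 + \frac{\Dt}{\KVisc}\NormHzInvTISPH{\DmFunc^{\k}}{\k}{\IndexSetAll}^2\right),
\]
and a discrete Gronwall/iteration argument using $(1+C\Dt)^{\k+1}\le e^{C\TimeMax}$ (valid since $(\k+1)\Dt\le\TimeMax$) delivers \eqref{thm:stab_im:ineq} with $\gconst=e^{C\TimeMax}\max(1,1/\KVisc)$. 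The $\h$-connectivity hypothesis enters via Lemma~\ref{lem:propaties_disc_norm:02} (to make $\NormHzTISPH{\cdot}{\k}{\IndexSetAll}$ a bona fide norm on the space of functions vanishing on $\IndexSetWlT{\k}$), while the semi-regularity hypothesis enters through Lemma~\ref{lem:inequarity_disc_norm}.

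\textbf{Main obstacle.} The delicate point is the \emph{exact} cancellation of the leading contributions in the correction-step estimate: the factor $2$ from the cross term must be matched by the factor $\Dim$ in Lemma~\ref{lem:inequarity_disc_norm}, which is precisely why the theorem is restricted to $\Dim=2$ (for $\Dim=3$ an $O(1)$ residual would survive and prevent Gronwall closure). A secondary technical nuisance is that $\VelPredDiv{\i}{\k+1}$ in \eqref{implicit:Poisson} is defined with $\Lambda=\IndexSetAll$, whereas $\PresAppGrad{\i}{\k+1}$ uses $\Lambda=\IndexSetFlT{\k}\cup\IndexSetSfT{\k}$, so the discrete integration by parts that links them carries a boundary-style remainder from wall particles; handling this mismatch carefully is the main technical work hidden inside Step~2.
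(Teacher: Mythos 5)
Your proposal follows essentially the same route as the paper's proof: a prediction-step energy estimate via Lemmas \ref{lem:propaties_disc_norm:01}--\ref{lem:propaties_disc_norm:03} and Young's inequality, a correction-step bound $\NormLISPH{\VelApp{}{\k+1}}{2}{\IndexSetAll}^2\leq\{1+\ConstSemiReg(2+\ConstSemiReg\Dt)\Dt\}\NormLISPH{\VelPred{}{\k+1}}{2}{\IndexSetAll}^2$ obtained by combining the pressure Poisson equation with Lemmas \ref{lem:propaties_disc_norm:03} and \ref{lem:inequarity_disc_norm} so that the factor $\Dim=2$ exactly cancels the cross term, and a discrete Gr\"onwall closure; your reading of the $\Dim=2$ cancellation as the crux is precisely the paper's mechanism, and the $\Dens$-dependence you attribute to the intermediate constant in fact cancels, consistent with the theorem statement. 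The wall-particle mismatch you flag between $\VelPredDiv{\i}{\k+1}$ (summed over all of $\IndexSetAll$) and the operators of Lemma \ref{lem:propaties_disc_norm:03} (summed over $\IndexSetFlT{\k}\cup\IndexSetSfT{\k}$) is a genuine subtlety that the paper's own proof passes over silently, so you are, if anything, more careful on that point.
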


\begin{proof}
	From \eqref{implicit:correction} and Lemma \ref{lem:inequarity_disc_norm}, we have
	\begin{align}
		\NormLISPH{\VelApp{}{\k+1}}{2}{\IndexSetAll}^2
		&=\NormLISPH{\VelApp{}{\k+1}}{2}{\IndexSetFlT{\k}\cup\IndexSetSfT{\k}}^2=\InnerProdISPH{\VelApp{}{\k+1}}{\VelApp{}{\k+1}}{\IndexSetFlT{\k}\cup\IndexSetSfT{\k}}
		\nn\\
		&=\InnerProdISPH{\VelPred{}{\k+1}-\dfrac{\Dt}{\Dens}\PresAppGrad{}{\k+1}}{\VelPred{}{\k+1}-\dfrac{\Dt}{\Dens}\PresAppGrad{}{\k+1}}{\IndexSetFlT{\k}\cup\IndexSetSfT{\k}}
		\nn\\
		&=\NormLISPH{\VelPred{}{\k+1}}{2}{\IndexSetFlT{\k}\cup\IndexSetSfT{\k}}^2+\dfrac{\Dt^2}{\Dens^2}\NormLISPH{\PresAppGrad{}{\k+1}}{2}{\IndexSetFlT{\k}\cup\IndexSetSfT{\k}}^2-2\dfrac{\Dt}{\Dens}\InnerProdISPH{\PresAppGrad{}{\k+1}}{\VelPred{}{\k+1}}{\IndexSetFlT{\k}\cup\IndexSetSfT{\k}}.
		\label{prf:thm:stab_im:VelApp_k+1:01}
	\end{align}
	From \eqref{implicit:Poisson} and Lemmas \ref{lem:propaties_disc_norm:03}--\ref{lem:inequarity_disc_norm}, we have
	\begin{align}
		\NormLISPH{\PresAppGrad{}{\k+1}}{2}{\IndexSetFlT{\k}\cup\IndexSetSfT{\k}}^2
		&\leq(2+\ConstSemiReg\Dt)\NormHzTISPH{\PresApp{}{\k+1}}{\k}{\IndexSetFlT{\k}\cup\IndexSetSfT{\k}}^2
		\nn\\
		&=-(2+\ConstSemiReg\Dt)\InnerProdISPH{\PresApp{}{\k+1}}{\PresAppLap{}{\k+1}}{\IndexSetFlT{\k}\cup\IndexSetSfT{\k}}
		\nn\\
		&=-(2+\ConstSemiReg\Dt)\dfrac{\Dens}{\Dt}\InnerProdISPH{\PresApp{}{\k+1}}{\VelPredDiv{}{\k+1}}{\IndexSetFlT{\k}\cup\IndexSetSfT{\k}}
		\nn\\
		&=(2+\ConstSemiReg\Dt)\dfrac{\Dens}{\Dt}\InnerProdISPH{\PresAppGrad{}{\k+1}}{\VelPred{}{\k+1}}{\IndexSetFlT{\k}\cup\IndexSetSfT{\k}}. \label{prf:thm:stab_im:PresApp_k+1:01}
	\end{align}
	From \eqref{prf:thm:stab_im:VelApp_k+1:01}--\eqref{prf:thm:stab_im:PresApp_k+1:01} and Lemma \ref{lem:propaties_disc_norm:01}, we have 
	\begin{align}
		\NormLISPH{\VelApp{}{\k+1}}{2}{\IndexSetAll}^2
		&\leq\NormLISPH{\VelPred{}{\k+1}}{2}{\IndexSetFlT{\k}\cup\IndexSetSfT{\k}}^2+\ConstSemiReg\Dt\dfrac{\Dt}{\Dens}\InnerProdISPH{\PresAppGrad{}{\k+1}}{\VelPred{}{\k+1}}{\IndexSetFlT{\k}\cup\IndexSetSfT{\k}}
		\nn\\
		&\leq\NormLISPH{\VelPred{}{\k+1}}{2}{\IndexSetFlT{\k}\cup\IndexSetSfT{\k}}^2+\ConstSemiReg\Dt\dfrac{\Dt}{\Dens}\NormLISPH{\PresAppGrad{}{\k+1}}{2}{\IndexSetFlT{\k}\cup\IndexSetSfT{\k}}\NormLISPH{\VelPred{}{\k+1}}{2}{\IndexSetFlT{\k}\cup\IndexSetSfT{\k}}
		\nn\\
		&=\NormLISPH{\VelPred{}{\k+1}}{2}{\IndexSetAll}^2+\ConstSemiReg\Dt\dfrac{\Dt}{\Dens}\NormLISPH{\PresAppGrad{}{\k+1}}{2}{\IndexSetFlT{\k}\cup\IndexSetSfT{\k}}\NormLISPH{\VelPred{}{\k+1}}{2}{\IndexSetAll}.
		\label{prf:thm:stab_im:VelApp_k+1:02}
	\end{align}
	Moreover, from \eqref{prf:thm:stab_im:PresApp_k+1:01} and Lemma \ref{lem:propaties_disc_norm:01}, we have
	\begin{align}
		\NormLISPH{\PresAppGrad{}{\k+1}}{2}{\IndexSetFlT{\k}\cup\IndexSetSfT{\k}}^2
		&\leq(2+\ConstSemiReg\Dt)\dfrac{\Dens}{\Dt}\NormLISPH{\PresAppGrad{}{\k+1}}{2}{\IndexSetFlT{\k}\cup\IndexSetSfT{\k}}\NormLISPH{\VelPred{}{\k+1}}{2}{\IndexSetFlT{\k}\cup\IndexSetSfT{\k}}. 
	\end{align}
	Therefore, we have
	\begin{equation}
		\NormLISPH{\PresAppGrad{}{\k+1}}{2}{\IndexSetFlT{\k}\cup\IndexSetSfT{\k}}\leq(2+\ConstSemiReg\Dt)\dfrac{\Dens}{\Dt}\NormLISPH{\VelPred{}{\k+1}}{2}{\IndexSetFlT{\k}\cup\IndexSetSfT{\k}}=(2+\ConstSemiReg\Dt)\dfrac{\Dens}{\Dt}\NormLISPH{\VelPred{}{\k+1}}{2}{\IndexSetAll}. 
		\label{prf:thm:stab_im:PresApp_k+1:02}
	\end{equation}
	By combining \eqref{prf:thm:stab_im:VelApp_k+1:02} and \eqref{prf:thm:stab_im:PresApp_k+1:02}, we obtain
	\begin{align}
		\NormLISPH{\VelApp{}{\k+1}}{2}{\IndexSetAll}^2
		&\leq\{1+\ConstSemiReg(2+\ConstSemiReg\Dt)\Dt\}\NormLISPH{\VelPred{}{\k+1}}{2}{\IndexSetAll}^2. 
		\label{prf:thm:stab_im:VelApp_k+1:03}
	\end{align}
	From \eqref{implicit:prediction} and Lemmas \ref{lem:propaties_disc_norm:01}--\ref{lem:propaties_disc_norm:03}, we have
	\begin{align}
		\NormLISPH{\VelPred{}{\k+1}}{2}{\IndexSetAll}^2
		&=\NormLISPH{\VelPred{}{\k+1}}{2}{\IndexSetFlT{\k}\cup\IndexSetSfT{\k}}^2=\InnerProdISPH{\VelPred{}{\k+1}}{\VelPred{}{\k+1}}{\IndexSetFlT{\k}\cup\IndexSetSfT{\k}}
		\nn\\
		&=\InnerProdISPH{\VelApp{}{\k}+\Dt\KVisc\VelPredLap{}{\k+1}+\Dt\DmFunc_{}^{\k}}{\VelPred{}{\k+1}}{\IndexSetFlT{\k}\cup\IndexSetSfT{\k}}
		\nn\\
		&=\InnerProdISPH{\VelApp{}{\k}}{\VelPred{}{\k+1}}{\IndexSetFlT{\k}\cup\IndexSetSfT{\k}}+\Dt\InnerProdISPH{\DmFunc_{}^{\k}}{\VelPred{}{\k+1}}{\IndexSetFlT{\k}\cup\IndexSetSfT{\k}}+\Dt\KVisc\InnerProdISPH{\VelPredLap{}{\k+1}}{\VelPred{}{\k+1}}{\IndexSetFlT{\k}\cup\IndexSetSfT{\k}}
		\nn\\
		&\leq\NormLISPH{\VelApp{}{\k}}{2}{\IndexSetFlT{\k}\cup\IndexSetSfT{\k}}\NormLISPH{\VelPred{}{\k+1}}{2}{\IndexSetFlT{\k}\cup\IndexSetSfT{\k}}+\Dt\NormHzInvTISPH{\DmFunc_{}^{\k}}{\k}{\IndexSetFlT{\k}\cup\IndexSetSfT{\k}}\NormHzTISPH{\VelPred{}{\k+1}}{\k}{\IndexSetFlT{\k}\cup\IndexSetSfT{\k}}-\Dt\KVisc\NormHzTISPH{\VelPred{}{\k+1}}{\k}{\IndexSetFlT{\k}\cup\IndexSetSfT{\k}}^2.
	\end{align}
	For $\alpha, \beta\in\dR$ and $s>0$, the following inequality holds: 
	\begin{align}
		\alpha\beta\leq \dfrac{s}{2}\alpha^2+\dfrac{1}{2s}\beta^2. 
		\label{ineq:ab}
	\end{align}
	Hence, by utilizing
	\begin{align}
		\NormLISPH{\VelApp{}{\k}}{2}{\IndexSetFlT{\k}\cup\IndexSetSfT{\k}}\NormLISPH{\VelPred{}{\k+1}}{2}{\IndexSetFlT{\k}\cup\IndexSetSfT{\k}}
		&\leq\dfrac{1}{2}\NormLISPH{\VelApp{}{\k}}{2}{\IndexSetFlT{\k}\cup\IndexSetSfT{\k}}^2+\dfrac{1}{2}\NormLISPH{\VelPred{}{\k+1}}{2}{\IndexSetFlT{\k}\cup\IndexSetSfT{\k}}^2=\dfrac{1}{2}\NormLISPH{\VelApp{}{\k}}{2}{\IndexSetAll}^2+\dfrac{1}{2}\NormLISPH{\VelPred{}{\k+1}}{2}{\IndexSetAll}^2,
		\\
		\NormHzInvTISPH{\DmFunc_{}^{\k}}{\k}{\IndexSetFlT{\k}\cup\IndexSetSfT{\k}}\NormHzTISPH{\VelPred{}{\k+1}}{\k}{\IndexSetFlT{\k}\cup\IndexSetSfT{\k}}
		&\leq\dfrac{1}{4\KVisc}\NormHzInvTISPH{\DmFunc_{}^{\k}}{\k}{\IndexSetFlT{\k}\cup\IndexSetSfT{\k}}^2+\KVisc\NormHzTISPH{\VelPred{}{\k+1}}{\k}{\IndexSetFlT{\k}\cup\IndexSetSfT{\k}}^2, 
	\end{align}
	we have
	\begin{align}
		\NormLISPH{\VelPred{}{\k+1}}{2}{\IndexSetAll}^2
		&\leq\dfrac{1}{2}\NormLISPH{\VelApp{}{\k}}{2}{\IndexSetAll}^2+\dfrac{1}{2}\NormLISPH{\VelPred{}{\k+1}}{2}{\IndexSetAll}^2+\dfrac{\Dt}{4\KVisc}\NormHzInvTISPH{\DmFunc_{}^{\k}}{\k}{\IndexSetFlT{\k}\cup\IndexSetSfT{\k}}^2. 
	\end{align}
	Thus, we have
	\begin{equation}
		\NormLISPH{\VelPred{}{\k+1}}{2}{\IndexSetAll}^2\leq\NormLISPH{\VelApp{}{\k}}{2}{\IndexSetAll}^2+\dfrac{\Dt}{2\KVisc}\NormHzInvTISPH{\DmFunc_{}^{\k}}{\k}{\IndexSetFlT{\k}\cup\IndexSetSfT{\k}}^2\leq\NormLISPH{\VelApp{}{\k}}{2}{\IndexSetAll}^2+\dfrac{\Dt}{2\KVisc}\NormHzInvTISPH{\DmFunc_{}^{\k}}{\k}{\IndexSetAll}^2.
		\label{prf:thm:stab_im:VelPred_k+1:01}
	\end{equation}
	From \eqref{prf:thm:stab_im:VelApp_k+1:03} and \eqref{prf:thm:stab_im:VelPred_k+1:01}, we have
	\begin{align}
		\NormLISPH{\VelApp{}{\k+1}}{2}{\IndexSetAll}^2
		&\leq\NormLISPH{\VelApp{}{\k}}{2}{\IndexSetAll}^2+\ConstSemiReg(2+\ConstSemiReg\Dt)\Dt\NormLISPH{\VelApp{}{\k}}{2}{\IndexSetAll}+(1+\ConstSemiReg(2+\ConstSemiReg\Dt)\Dt)\dfrac{\Dt}{2\KVisc}\NormHzInvTISPH{\DmFunc_{}^{\k}}{\k}{\IndexSetAll}^2. 
		\label{prf:thm:stab_im:Vel_k+1:01}
	\end{align}
	By replacing the index $\k$ with $\l$ in \eqref{prf:thm:stab_im:Vel_k+1:01} and summing it over $\l=0$ to $\k$, we have
	\begin{equation}
		\NormLISPH{\VelApp{}{\k+1}}{2}{\IndexSetAll}^2
		\leq\NormLISPH{\VelIni}{2}{\IndexSetAll}^2+\ConstSemiReg(2+\ConstSemiReg\Dt)\sum_{\l=0}^\k\Dt\NormLISPH{\VelApp{}{\l}}{2}{\IndexSetAll}+\dfrac{1+\ConstSemiReg(2+\ConstSemiReg\Dt)\Dt}{2\KVisc}\sum_{\l=0}^\k\Dt\NormHzInvTISPH{\DmFunc_{}^{\l}}{\l}{\IndexSetAll}^2.
	\end{equation}
	Because $\Dt<\TimeMax$, we have 
	\begin{equation}
		\NormLISPH{\VelApp{}{\k+1}}{2}{\IndexSetAll}^2
		\leq\NormLISPH{\VelIni}{2}{\IndexSetAll}^2+\ConstSemiReg(2+\ConstSemiReg\TimeMax)\sum_{\l=0}^\k\Dt\NormLISPH{\VelApp{}{\l}}{2}{\IndexSetAll}+\dfrac{1+\ConstSemiReg(2+\ConstSemiReg\TimeMax)\TimeMax}{2\KVisc}\sum_{\l=0}^\k\Dt\NormHzInvTISPH{\DmFunc_{}^{\l}}{\l}{\IndexSetAll}^2. 
	\end{equation}
	Consequently, Gr\"onwall's inequality (see Appendix \ref{appendix:math_tool}) yields
	\begin{equation}
		\NormLISPH{\VelApp{}{\k+1}}{2}{\IndexSetAll}^2
		\leq\exp\BrS{\ConstSemiReg(2+\ConstSemiReg\TimeMax)\TimeMax}\left(\NormLISPH{\VelIni}{2}{\IndexSetAll}^2+\dfrac{1+\ConstSemiReg(2+\ConstSemiReg\TimeMax)\TimeMax}{2\KVisc}\sum_{l=1}^\k\Dt\NormHzInvTISPH{\DmFunc_{}^{\l}}{\l}{\IndexSetAll}^2\right). 
	\end{equation}
	Taking $c$ as
	\begin{equation}
		c=\exp\BrS{\ConstSemiReg(2+\ConstSemiReg\TimeMax)\TimeMax}\max\BrM{1,\dfrac{1+\ConstSemiReg(2+\ConstSemiReg\TimeMax)\TimeMax}{2\KVisc}}, 
	\end{equation}
	we conclude \eqref{thm:stab_im:ineq}. 
\end{proof}

\subsection{Stability for the semi-implicit scheme}
\label{subsec:stab_semi-implicit_scheme}
In addition to the stability for the implicit scheme, we obtain the following stability of velocity in two-dimensional space for the semi-implicit scheme in the ISPH method.  
\begin{theorem}
	\label{thm:stab_semi}
	Let $\Dim=2$. 
	Let $(\VelApp{}{\k+1},\PresApp{}{\k+1})$ be the solution of the semi-implicit scheme in the ISPH method. 
	Assume a family $\{(\{\PtSetT{\k}\}_{\k=1}^{\TStepMax}, \PvSet, \h, \Dt)\}$ that satisfies the semi-regularity condition with $\ConstSemiReg$ and time step condition with $\ConstTStepCond$ and whose particle distribution $\PtSetT{\k}\,(\k=1,2,\dots,\TStepMax)$ satisfies the $\h$-connectivity condition. 
	Then, there exists a positive constant $\gconst$ dependent only on $\TimeMax$, $\KVisc$, $\ConstSemiReg$, and $\ConstTStepCond$ such that
	\begin{equation}
		\NormLISPH{\VelApp{}{\k+1}}{2}{\IndexSetAll}^2
		\leq\gconst\BrS{\NormLISPH{\VelIni}{2}{\IndexSetAll}^2+\Dt\sum_{\l=0}^\k\Dt\NormLISPH{\DmFunc_{}^{\l}}{2}{\IndexSetAll}^2+\sum_{\l=0}^\k\Dt\NormHzInvTISPH{\DmFunc_{}^{\l}}{\l}{\IndexSetAll}^2},\qquad \k=0,1,\dots,\TStepMax-1. 
		\label{thm:stab_semi:ineq}
	\end{equation}
\end{theorem}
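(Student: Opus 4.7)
The plan is to mirror the proof of Theorem \ref{thm:stab_im}: first bound $\NormLISPH{\VelApp{}{\k+1}}{2}{\IndexSetAll}^2$ in terms of $\NormLISPH{\VelPred{}{\k+1}}{2}{\IndexSetAll}^2$ via the correction step and pressure Poisson equation, then estimate the prediction step and close with discrete Gr\"onwall. Since \eqref{semi-implicit:Poisson} and \eqref{semi-implicit:correction} are literally the same equations as \eqref{implicit:Poisson} and \eqref{implicit:correction}, the entire correction half of the argument carries over verbatim, giving
\begin{equation*}
\NormLISPH{\VelApp{}{\k+1}}{2}{\IndexSetAll}^2\leq\BrM{1+\ConstSemiReg(2+\ConstSemiReg\Dt)\Dt}\NormLISPH{\VelPred{}{\k+1}}{2}{\IndexSetAll}^2
\end{equation*}
by Lemmas \ref{lem:propaties_disc_norm:01}, \ref{lem:propaties_disc_norm:03}, and \ref{lem:inequarity_disc_norm}; no new idea enters here.

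All the new work goes into bounding $\NormLISPH{\VelPred{}{\k+1}}{2}{\IndexSetAll}^2$. Because \eqref{semi-implicit:prediction} is now explicit, I would substitute $\VelPred{\i}{\k+1}=\VelApp{\i}{\k}+\Dt\KVisc\VelAppLap{\i}{\k}+\Dt\DmFunc_{\i}^{\k}$, expand the squared norm, and exploit $\VelApp{\i}{\k}=0=\DmFunc_{\i}^{\k}$ for $\i\in\IndexSetWlT{\k}$ to promote all inner products to $\IndexSetAll$. Lemma \ref{lem:propaties_disc_norm:03} converts the cross term to $2\Dt\KVisc\InnerProdISPH{\VelApp{}{\k}}{\VelAppLap{}{\k}}{\IndexSetAll}=-2\Dt\KVisc\NormHzTISPH{\VelApp{}{\k}}{\k}{\IndexSetAll}^2$, while Lemma \ref{lem:ineq:|Lap|} controls the square of the explicit Laplacian by $\Dt^2\KVisc^2\NormLISPH{\VelAppLap{}{\k}}{2}{\IndexSetAll}^2\leq 2\Dt\KVisc\ConstTStepCond\NormHzTISPH{\VelApp{}{\k}}{\k}{\IndexSetAll}^2$. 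Combined, these leave a net dissipative budget of $-2\Dt\KVisc(1-\ConstTStepCond)\NormHzTISPH{\VelApp{}{\k}}{\k}{\IndexSetAll}^2$, which is strictly negative thanks to $\ConstTStepCond<1$. This is exactly where the time step condition earns its keep, and constitutes the main new obstacle of the semi-implicit analysis: one must extract enough dissipation from $\VelApp{}{\k}$ to cover both the expanded Laplacian square and the residues of the force cross terms.

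The two remaining inner products involving $\DmFunc^{\k}$ are then dispatched by Young's inequality with scales tuned so that the dissipative budget above absorbs every stray $\NormHzTISPH{\VelApp{}{\k}}{\k}{\cdot}^2$ piece. For $2\Dt\InnerProdISPH{\VelApp{}{\k}}{\DmFunc^{\k}}{\IndexSetAll}$ I would invoke Lemma \ref{lem:propaties_disc_norm:02} (legal since $\VelApp{}{\k}=0$ on the wall) to bound it by $2\Dt\NormHzInvTISPH{\DmFunc^{\k}}{\k}{\IndexSetAll}\NormHzTISPH{\VelApp{}{\k}}{\k}{\IndexSetAll}$ and split it so the force residue is of order $\Dt\NormHzInvTISPH{\DmFunc^{\k}}{\k}{\IndexSetAll}^2$. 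For $2\Dt^2\KVisc\InnerProdISPH{\VelAppLap{}{\k}}{\DmFunc^{\k}}{\IndexSetAll}$ I would apply Cauchy--Schwarz, Lemma \ref{lem:ineq:|Lap|}, and Young's with a constant-in-$\Dt$ scaling chosen so that the Laplacian factor produces another absorbable $\Dt\NormHzTISPH{\VelApp{}{\k}}{\k}{\IndexSetAll}^2$ term while the force residue emerges as $\Dt^2\NormLISPH{\DmFunc^{\k}}{2}{\IndexSetAll}^2$; the direct square $\Dt^2\NormLISPH{\DmFunc^{\k}}{2}{\IndexSetAll}^2$ is already of the correct form. Combined with the correction estimate above, these bounds yield a per-step inequality
\begin{equation*}
\NormLISPH{\VelApp{}{\k+1}}{2}{\IndexSetAll}^2\leq(1+C\Dt)\NormLISPH{\VelApp{}{\k}}{2}{\IndexSetAll}^2+C\Dt\NormHzInvTISPH{\DmFunc^{\k}}{\k}{\IndexSetAll}^2+C\Dt^2\NormLISPH{\DmFunc^{\k}}{2}{\IndexSetAll}^2
\end{equation*}
with $C$ depending only on $\TimeMax$, $\KVisc$, $\ConstSemiReg$, and $\ConstTStepCond$. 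Summing over $\l=0,\dots,\k$ and applying the discrete Gr\"onwall inequality (Appendix \ref{appendix:math_tool}) gives \eqref{thm:stab_semi:ineq}; the factor $\Dt$ that appears outside $\sum_{\l}\Dt\NormLISPH{\DmFunc^{\l}}{2}{\IndexSetAll}^2$ in the statement is precisely the extra power of $\Dt$ picked up from the Young scaling on the explicit Laplacian--force cross term, and strict inequality in $\ConstTStepCond<1$ is what guarantees a positive dissipative slack in the Young calibration.
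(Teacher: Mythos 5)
Your proposal is correct and follows essentially the same route as the paper: the correction/Poisson half is carried over verbatim from Theorem \ref{thm:stab_im}, the explicit prediction step is expanded into six terms, Lemma \ref{lem:propaties_disc_norm:03} turns the velocity--Laplacian cross term into $-2\Dt\KVisc\NormHzTISPH{\VelApp{}{\k}}{\k}{\cdot}^2$, Lemma \ref{lem:ineq:|Lap|} (the time step condition with $\ConstTStepCond<1$) supplies exactly the dissipative slack needed to absorb the Laplacian square and the Young residues of the force cross terms, and discrete Gr\"onwall closes the argument. The only difference is cosmetic bookkeeping --- you spend the Lemma \ref{lem:ineq:|Lap|} budget on the direct Laplacian square first and absorb the Laplacian--force cross term from the remainder, whereas the paper groups both into a single factor $\Dt\KVisc(1+\ConstTStepCond)\bigl(\tfrac{\Dt\KVisc}{2\ConstTStepCond}\NormLISPH{\VelAppLap{}{\k}}{2}{\cdot}^2-\NormHzTISPH{\VelApp{}{\k}}{\k}{\cdot}^2\bigr)\leq 0$ --- and your identification of the extra factor of $\Dt$ on the $\NormLISPH{\DmFunc}{2}{\cdot}^2$ term as coming from that cross term matches the paper's $\Dt^2\frac{1+\ConstTStepCond}{1-\ConstTStepCond}$ coefficient.
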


\begin{proof}
	By the same procedure as for the estimation of $\VelApp{}{\k+1}$ in the proof of Theorem \ref{thm:stab_im}, we obtain
	\begin{align}
		\NormLISPH{\VelApp{}{\k+1}}{2}{\IndexSetAll}^2
		&\leq\{1+\ConstSemiReg(2+\ConstSemiReg\Dt)\Dt\}\NormLISPH{\VelPred{}{\k+1}}{2}{\IndexSetAll}^2. 
		\label{prf:thm:stab_semi:VelApp_k+1:02}
	\end{align}
	From \eqref{implicit:prediction} and Lemmas \ref{lem:propaties_disc_norm:01}--\ref{lem:propaties_disc_norm:02}, we have
	\begin{align}
		\NormLISPH{\VelPred{}{\k+1}}{2}{\IndexSetAll}^2
		&=\NormLISPH{\VelPred{}{\k+1}}{2}{\IndexSetFlT{\k}\cup\IndexSetSfT{\k}}^2=\InnerProdISPH{\VelPred{}{\k+1}}{\VelPred{}{\k+1}}{\IndexSetFlT{\k}\cup\IndexSetSfT{\k}}
		\nn\\
		&=\InnerProdISPH{\VelApp{}{\k}+\Dt\KVisc\VelAppLap{}{\k}+\Dt\DmFunc_{}^{\k}}{\VelApp{}{\k}+\Dt\KVisc\VelAppLap{}{\k}+\Dt\DmFunc_{}^{\k}}{\IndexSetFlT{\k}\cup\IndexSetSfT{\k}}
		\nn\\
		&=\NormLISPH{\VelApp{}{\k}}{2}{\IndexSetFlT{\k}\cup\IndexSetSfT{\k}}^2+\Dt^2\KVisc^2\NormLISPH{\VelAppLap{}{\k}}{2}{\IndexSetFlT{\k}\cup\IndexSetSfT{\k}}^2+\Dt^2\NormLISPH{\DmFunc_{}^{\k}}{2}{\IndexSetFlT{\k}\cup\IndexSetSfT{\k}}^2
		\nn\\
		&\quad+2\Dt\KVisc\InnerProdISPH{\VelApp{}{\k}}{\VelAppLap{}{\k}}{\IndexSetFlT{\k}\cup\IndexSetSfT{\k}}+2\Dt^2\KVisc\InnerProdISPH{\VelAppLap{}{\k}}{\DmFunc_{}^{\k}}{\IndexSetFlT{\k}\cup\IndexSetSfT{\k}}
		+2\Dt\InnerProdISPH{\DmFunc_{}^{\k}}{\VelApp{}{\k}}{\IndexSetFlT{\k}\cup\IndexSetSfT{\k}}
		\nn\\
		&\leq\NormLISPH{\VelApp{}{\k}}{2}{\IndexSetAll}^2+\Dt^2\KVisc^2\NormLISPH{\VelAppLap{}{\k}}{2}{\IndexSetFlT{\k}\cup\IndexSetSfT{\k}}^2+\Dt^2\NormLISPH{\DmFunc_{}^{\k}}{2}{\IndexSetFlT{\k}\cup\IndexSetSfT{\k}}^2-2\Dt\KVisc\NormHzTISPH{\VelApp{}{\k}}{\k}{\IndexSetFlT{\k}\cup\IndexSetSfT{\k}}^2\nn\\
		&\quad+2\Dt^2\KVisc\NormLISPH{\VelAppLap{}{\k}}{2}{\IndexSetFlT{\k}\cup\IndexSetSfT{\k}}\NormLISPH{\DmFunc_{}^{\k}}{2}{\IndexSetFlT{\k}\cup\IndexSetSfT{\k}}+2\Dt\NormHzInvTISPH{\DmFunc_{}^{\k}}{\k}{\IndexSetFlT{\k}\cup\IndexSetSfT{\k}}\NormHzTISPH{\VelApp{}{\k}}{\k}{\IndexSetFlT{\k}\cup\IndexSetSfT{\k}}. 
	\end{align}
	From \eqref{ineq:ab}, for $\ConstTStepCond\,(0<\ConstTStepCond<1)$, we have
	\begin{align}
		\NormLISPH{\VelAppLap{}{\k}}{2}{\IndexSetFlT{\k}\cup\IndexSetSfT{\k}}\NormLISPH{\DmFunc_{}^{\k}}{2}{\IndexSetFlT{\k}\cup\IndexSetSfT{\k}}
		&\leq\dfrac{\KVisc(1-\ConstTStepCond)}{4\ConstTStepCond}\NormLISPH{\VelAppLap{}{\k}}{2}{\IndexSetFlT{\k}\cup\IndexSetSfT{\k}}^2+\dfrac{\ConstTStepCond}{\KVisc(1-\ConstTStepCond)}\NormLISPH{\DmFunc_{}^{\k}}{2}{\IndexSetFlT{\k}\cup\IndexSetSfT{\k}}^2,
		\\
		\NormHzInvTISPH{\DmFunc_{}^{\k}}{\k}{\IndexSetFlT{\k}\cup\IndexSetSfT{\k}}\NormHzTISPH{\VelApp{}{\k}}{\k}{\IndexSetFlT{\k}\cup\IndexSetSfT{\k}}
		&\leq\dfrac{1}{2\KVisc(1-\ConstTStepCond)}\NormHzInvTISPH{\DmFunc_{}^{\k}}{\k}{\IndexSetFlT{\k}\cup\IndexSetSfT{\k}}^2+\dfrac{\KVisc(1-\ConstTStepCond)}{2}\NormHzTISPH{\VelApp{}{\k}}{\k}{\IndexSetFlT{\k}\cup\IndexSetSfT{\k}}^2. 
	\end{align}
	Hence, we have
	\begin{align}
		\NormLISPH{\VelPred{}{\k+1}}{2}{\IndexSetAll}^2
		&\leq\NormLISPH{\VelApp{}{\k}}{2}{\IndexSetAll}^2+\Dt^2\dfrac{1+\ConstTStepCond}{1-\ConstTStepCond}\NormLISPH{\DmFunc_{}^{\k}}{2}{\IndexSetFlT{\k}\cup\IndexSetSfT{\k}}^2+\dfrac{\Dt}{\KVisc(1-\ConstTStepCond)}\NormHzInvTISPH{\DmFunc_{}^{\k}}{\k}{\IndexSetFlT{\k}\cup\IndexSetSfT{\k}}^2
		\nn\\
		&\quad+\Dt\KVisc(1+\ConstTStepCond)\BrS{\dfrac{\Dt\KVisc}{2\ConstTStepCond}\NormLISPH{\VelAppLap{}{\k}}{2}{\IndexSetFlT{\k}\cup\IndexSetSfT{\k}}^2-\NormHzTISPH{\VelApp{}{\k}}{\k}{\IndexSetFlT{\k}\cup\IndexSetSfT{\k}}^2}. 
	\end{align}
	By Lemma \ref{lem:ineq:|Lap|}, we have 
	\begin{align}
		\dfrac{\Dt\KVisc}{2\ConstTStepCond}\NormLISPH{\VelAppLap{}{\k}}{2}{\IndexSetFlT{\k}\cup\IndexSetSfT{\k}}^2-\NormHzTISPH{\VelApp{}{\k}}{\k}{\IndexSetFlT{\k}\cup\IndexSetSfT{\k}}^2\leq 0. 
	\end{align}
	Hence, we obtain
	\begin{align}
		\NormLISPH{\VelPred{}{\k+1}}{2}{\IndexSetAll}^2
		&\leq\NormLISPH{\VelApp{}{\k}}{2}{\IndexSetFlT{\k}\cup\IndexSetSfT{\k}}^2+\Dt^2\dfrac{1+\ConstTStepCond}{1-\ConstTStepCond}\NormLISPH{\DmFunc_{}^{\k}}{2}{\IndexSetFlT{\k}\cup\IndexSetSfT{\k}}^2+\dfrac{\Dt}{\KVisc(1-\ConstTStepCond)}\NormHzInvTISPH{\DmFunc_{}^{\k}}{\k}{\IndexSetFlT{\k}\cup\IndexSetSfT{\k}}^2
		\nn\\
		&\leq\NormLISPH{\VelApp{}{\k}}{2}{\IndexSetAll}^2+\Dt^2\dfrac{1+\ConstTStepCond}{1-\ConstTStepCond}\NormLISPH{\DmFunc_{}^{\k}}{2}{\IndexSetAll}^2+\dfrac{\Dt}{\KVisc(1-\ConstTStepCond)}\NormHzInvTISPH{\DmFunc_{}^{\k}}{\k}{\IndexSetAll}^2.
		\label{prf:thm:stab_semi:VelPred_k+1:01}
	\end{align}
	From \eqref{prf:thm:stab_semi:VelApp_k+1:02} and \eqref{prf:thm:stab_semi:VelPred_k+1:01}, we have
	\begin{multline}
		\NormLISPH{\VelApp{}{\k+1}}{2}{\IndexSetAll}^2
		\leq\NormLISPH{\VelApp{}{\k}}{2}{\IndexSetAll}^2+\ConstSemiReg(2+\ConstSemiReg\Dt)\Dt\NormLISPH{\VelApp{}{\k}}{2}{\IndexSetAll}
		\\
		+\{1+\ConstSemiReg(2+\ConstSemiReg\Dt)\Dt\}\BrM{\Dt^2\dfrac{1+\ConstTStepCond}{1-\ConstTStepCond}\NormLISPH{\DmFunc_{}^{\k}}{2}{\IndexSetAll}^2+\dfrac{\Dt}{\KVisc(1-\ConstTStepCond)}\NormHzInvTISPH{\DmFunc_{}^{\k}}{\k}{\IndexSetAll}^2}. 
		\label{prf:thm:stab_semi:Vel_k+1:01}
	\end{multline}
	By replacing the index $\k$ with $\l$ in \eqref{prf:thm:stab_semi:Vel_k+1:01} and summing it over $\l=0$ to $\k$, we have
	\begin{multline}
		\NormLISPH{\VelApp{}{\k+1}}{2}{\IndexSetAll}^2
		\leq\NormLISPH{\VelIni}{2}{\IndexSetAll}^2+\ConstSemiReg(2+\ConstSemiReg\Dt)\sum_{\l=0}^\k\Dt\NormLISPH{\VelApp{}{\l}}{2}{\IndexSetAll}
		\\
		+\dfrac{1+\ConstSemiReg(2+\ConstSemiReg\TimeMax)\TimeMax}{1-\ConstTStepCond}\BrM{\Dt(1+\ConstTStepCond)\sum_{\l=0}^\k\Dt\NormLISPH{\DmFunc_{}^{\l}}{2}{\IndexSetAll}^2+\dfrac{1}{\KVisc}\sum_{\l=0}^\k\Dt\NormHzInvTISPH{\DmFunc_{}^{\l}}{\l}{\IndexSetAll}^2}. 
	\end{multline}
	From $\Dt<\TimeMax$, we have 
	\begin{multline}
		\NormLISPH{\VelApp{}{\k+1}}{2}{\IndexSetAll}^2
		\leq\NormLISPH{\VelIni}{2}{\IndexSetAll}^2+\ConstSemiReg(2+\ConstSemiReg\TimeMax)\sum_{\l=0}^\k\Dt\NormLISPH{\VelApp{}{\l}}{2}{\IndexSetAll}\\
		+\dfrac{1+\ConstSemiReg(2+\ConstSemiReg\TimeMax)\TimeMax}{1-\ConstTStepCond}\BrM{\Dt(1+\ConstTStepCond)\sum_{\l=0}^\k\Dt\NormLISPH{\DmFunc_{}^{\l}}{2}{\IndexSetAll}^2+\dfrac{1}{\KVisc}\sum_{\l=0}^\k\Dt\NormHzInvTISPH{\DmFunc_{}^{\l}}{\l}{\IndexSetAll}^2}. 
	\end{multline}
	Consequently, Gr\"onwall's inequality (see Appendix \ref{appendix:math_tool}) yields
	\begin{multline}
		\NormLISPH{\VelApp{}{\k+1}}{2}{\IndexSetAll}^2
		\leq\exp\BrS{\ConstSemiReg(2+\ConstSemiReg\TimeMax)\TimeMax}\Bigg\{\NormLISPH{\VelIni}{2}{\IndexSetAll}^2
		\\
		\quad+\dfrac{1+\ConstSemiReg(2+\ConstSemiReg\TimeMax)\TimeMax}{1-\ConstTStepCond}\bigg(\Dt(1+\ConstTStepCond)\sum_{\l=0}^\k\Dt\NormLISPH{\DmFunc_{}^{\l}}{2}{\IndexSetAll}^2+\dfrac{1}{\KVisc}\sum_{\l=0}^\k\Dt\NormHzInvTISPH{\DmFunc_{}^{\l}}{\l}{\IndexSetAll}^2\bigg)\Bigg\}. 
		\label{prf:thm:stab_semi:VelApp_k+1:03}
	\end{multline}
	Taking $c$ as
	\begin{equation}
		c=\exp\BrS{\ConstSemiReg(2+\ConstSemiReg\TimeMax)\TimeMax}\max\BrM{1,(1+\ConstSemiReg(2+\ConstSemiReg\TimeMax)\TimeMax)\dfrac{1+\ConstTStepCond}{1-\ConstTStepCond},\dfrac{1+\ConstSemiReg(2+\ConstSemiReg\TimeMax)\TimeMax}{\KVisc(1-\ConstTStepCond)}}, 
	\end{equation}
	we conclude \eqref{thm:stab_semi:ineq}. 
\end{proof}

\subsection{Extension for modified schemes}
\label{sec:modified_schemes}
We consider improving the implicit and semi-implicit schemes by utilizing our results. 
As the time step $\Dt$ and particle volume set $\PvSet=\{\Pv{\i}\}$ are fixed in the previous sections, we consider the introduction of modified schemes with variable time step $\DtT{\k}$ and particle volume set $\PvSetT{\k}=\{\PvT{\i}{\k}\}$ defined so as to satisfy some key conditions. 

For $\k=0,1,\dots,\TStepMax-1$, let $\DtT{\k}>0$ be a variable time step satisfying
\begin{equation}
	\sum_{\k=0}^{\TStepMax-1}\DtT{\k}=\TimeMax. 
\end{equation}
Then, the $\TStep$th time $\tk$ is defined as $\tk=0\,(\k=0)$, and $\tkp\deq\tk+\DtT{\k}\,(\k=0,1,\dots,\TStepMax-1)$. 
We set the particle volume set $\PvSetT{\k}=\{\PvT{\i}{\k}\}$ by a solution of the linear equation
\begin{equation}
	A^\k \PvChar^\k=b^\k, 
	\label{def:mod_pv}
\end{equation}
where $A^\k\in\dR^{\N\times\N}$, $\PvChar^\k\in\dR^{\N}$, and $b^\k\in\dR^{\N}$ are
\begin{align}
	A^\k_{\i\j}&\deq\BrANd{\PtT{\i}{\k}-\PtT{\j}{\k}}\BrANd{\whd(|\PtT{\i}{\k}-\PtT{\j}{\k}|)},
	\\
	\PvChar^\k&\deq(\PvT{1}{\k}, \PvT{2}{\k},\dots,\PvT{\N}{\k})^{\rm T},
	\\
	b^\k&\deq(\Dim, \Dim,\dots,\Dim)^{\rm T}, 
\end{align}
respectively. 
Then, because the condition  
\begin{equation}
	\sum_{\j=1}^\N\PvT{\j}{\k}\BrANd{\PtT{\i}{\k}-\PtT{\j}{\k}}\BrANd{\whd(|\PtT{\i}{\k}-\PtT{\j}{\k}|)}=\Dim,\qquad \i=1,2,\dots,\N, 
\end{equation}
is satisfied, the semi-regularity condition \eqref{cond:semi-regular} is automatically satisfied at  $\tk$. 
Therefore, we obtain the following corollary: 
\begin{cor}
	\label{cor:stab_improved_implicit}
	Let $\Dim=2$. 
	Let $(\VelApp{}{\k+1},\PresApp{}{\k+1})$ be the solution of the modified implicit scheme, which is the implicit scheme whose time step $\Dt$ and particle volume set $\PvSet$ are replaced with variable time step $\DtT{\k}$ and particle volume set $\PvSetT{\k}$. 
	Assume for a family $\{(\{\PtSetT{\k}, \PvSetT{\k}\}_{\k=1}^{\TStepMax}, \h, \DtT{\k})\}$ that its particle distribution $\PtSetT{\k}$ satisfies the $\h$-connectivity condition and particle volume set $\PvSetT{\k}=\{\PvT{\i}{\k}>0\}$ exists for $\k=0,1,\dots,\TStepMax$.  
	Then, there exists a positive constant $\gconst$ dependent only on $\TimeMax$, $\KVisc$, and $\ConstSemiReg$ such that
	\begin{equation}
		\NormLISPH{\VelApp{}{\k+1}}{2}{\IndexSetAll}^2
		\leq\gconst\BrS{\NormLISPH{\VelIni}{2}{\IndexSetAll}^2+\sum_{\l=0}^\k\DtT{\l}\NormHzInvTISPH{\DmFunc_{}^{\l}}{\l}{\IndexSetAll}^2},\qquad \k=0,1,\dots,\TStepMax-1. 
		\label{cor:stab_im:ineq}
	\end{equation}
\end{cor}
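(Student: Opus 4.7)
The plan is to adapt the proof of Theorem \ref{thm:stab_im} to the variable-step, per-time-level-volume setting, exploiting the fact that the construction \eqref{def:mod_pv} makes the semi-regularity condition trivially satisfied.

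First, I would observe that whenever a positive solution $\PvSetT{\k}=\{\PvT{\i}{\k}>0\}$ of the linear system \eqref{def:mod_pv} exists, it satisfies
\begin{equation}
\sum_{\j=1}^{\N}\PvT{\j}{\k}\BrANd{\PtT{\i}{\k}-\PtT{\j}{\k}}\BrANd{\whd(|\PtT{\i}{\k}-\PtT{\j}{\k}|)}=\Dim
\end{equation}
for all $\i$ and $\k$. Hence the semi-regularity condition \eqref{cond:semi-regular} holds with $\ConstSemiReg=0$, which cleans up every appearance of Lemma \ref{lem:inequarity_disc_norm} in the ensuing estimates: the right-hand side coefficient $(\Dim+\ConstSemiReg\Dt)$ collapses to $\Dim$. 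The spatial lemmas \ref{lem:propaties_disc_norm:01}--\ref{lem:inequarity_disc_norm} are purely instantaneous in time, so they apply verbatim at each $\tk$ with the volumes $\PvT{\j}{\k}$; likewise the unique solvability argument of Theorem \ref{theorem:unique_solvability} applies at each step under the $\h$-connectivity hypothesis, guaranteeing that the modified scheme produces well-defined iterates.

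Second, I would repeat the chain of estimates in the proof of Theorem \ref{thm:stab_im} verbatim, with $\Dt$ replaced by $\DtT{\k}$ and $\Pv{\j}$ replaced by $\PvT{\j}{\k}$ throughout. Because $\ConstSemiReg=0$, the bound \eqref{prf:thm:stab_im:VelApp_k+1:03} reduces to
\begin{equation}
\NormLISPH{\VelApp{}{\k+1}}{2}{\IndexSetAll}^2\leq\NormLISPH{\VelPred{}{\k+1}}{2}{\IndexSetAll}^2,
\end{equation}
and the prediction-step estimate \eqref{prf:thm:stab_im:VelPred_k+1:01} becomes
\begin{equation}
\NormLISPH{\VelPred{}{\k+1}}{2}{\IndexSetAll}^2\leq\NormLISPH{\VelApp{}{\k}}{2}{\IndexSetAll}^2+\dfrac{\DtT{\k}}{2\KVisc}\NormHzInvTISPH{\DmFunc_{}^{\k}}{\k}{\IndexSetAll}^2.
\end{equation}
Combining these two inequalities gives the one-step recursion
\begin{equation}
\NormLISPH{\VelApp{}{\k+1}}{2}{\IndexSetAll}^2\leq\NormLISPH{\VelApp{}{\k}}{2}{\IndexSetAll}^2+\dfrac{\DtT{\k}}{2\KVisc}\NormHzInvTISPH{\DmFunc_{}^{\k}}{\k}{\IndexSetAll}^2.
\end{equation}

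Third, I would telescope over $\l=0,1,\dots,\k$ and invoke the variable-step discrete Gr\"onwall inequality. Since $\sum_{\l=0}^{\TStepMax-1}\DtT{\l}=\TimeMax$, the standard discrete Gr\"onwall lemma (Appendix \ref{appendix:math_tool}) applies without modification, absorbing the cumulative source contribution and producing \eqref{cor:stab_im:ineq} with a constant $\gconst$ depending only on $\TimeMax$ and $\KVisc$ (and trivially on $\ConstSemiReg$ through the statement, since in the present situation one may take $\ConstSemiReg=0$).

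The main obstacle, such as it is, is purely bookkeeping: one must verify that none of the algebraic steps in the proof of Theorem \ref{thm:stab_im} implicitly use uniformity of $\Dt$ or $\Pv{\j}$ in $\k$. A careful reading shows they do not, because the lemmas of Section \ref{subsec:Discrete_Sobolev_norms} are stated at an arbitrary time level and the Gr\"onwall step accommodates variable step sizes. No new analytic ingredient is required.
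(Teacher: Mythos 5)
Your proposal is correct and follows essentially the same route as the paper: the corollary is obtained by observing that the constructed volumes $\PvSetT{\k}$ force equality in \eqref{cond:semi-regular}, so the semi-regularity condition holds automatically and the argument of Theorem \ref{thm:stab_im} (with $\Dt$, $\Pv{\j}$ replaced by $\DtT{\k}$, $\PvT{\j}{\k}$ and the variable-step Gr\"onwall inequality) carries over unchanged. Your additional remark that one may effectively take $\ConstSemiReg=0$, simplifying the intermediate constants, is a harmless refinement of the same argument.
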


Moreover, for fixed constant $\ConstTStepCond\,(0<\ConstTStepCond<1)$, we give $\Dt$ as
\begin{equation}
	\DtT{\k}=\dfrac{\ConstTStepCond}{2\KVisc}\BrL{\max_{\i=1,2,\dots,\N}\BrS{\sum_{\j\neq\i}\PvT{\j}{\k}\dfrac{\BrANd{\whd(|\PtT{\i}{\k}-\PtT{\j}{\k}|)}}{|\PtT{\i}{\k}-\PtT{\j}{\k}|}}}^{-1},\qquad \k=1,2,\dots,\TStepMax. 
	\label{setting:DtT}
\end{equation}
Then, the time step condition \eqref{cond:time-step} is automatically satisfied at each time step. 
Therefore, we obtain the following corollary: 

\begin{cor}
	\label{cor:stab_improved_semi}
	Let $\Dim=2$. 
	Let $(\VelApp{}{\k+1},\PresApp{}{\k+1})$ be the solution of the modified semi-implicit scheme, which is the semi-implicit scheme whose time step $\Dt$ and particle volume set $\PvSet$ are replaced with variable time step $\DtT{\k}$ and particle volume set $\PvSetT{\k}$. 
	Assume for a family $\{(\{\PtSetT{\k}, \PvSetT{\k}\}_{\k=1}^{\TStepMax}, \h, \DtT{\k})\}$ that its particle distribution $\PtSetT{\k}$ satisfies the $\h$-connectivity condition and particle volume set $\PvSetT{\k}=\{\PvT{\i}{\k}>0\}$ exists for $\k=0,1,\dots,\TStepMax$.  
	Then, there exists a positive constant $\gconst$ dependent only on $\TimeMax$, $\KVisc$, $\ConstSemiReg$, and $\ConstTStepCond$ such that
	\begin{equation}
		\NormLISPH{\VelApp{}{\k+1}}{2}{\IndexSetAll}^2
		\leq\gconst\BrS{\NormLISPH{\VelIni}{2}{\IndexSetAll}^2+\sum_{\l=0}^\k(\DtT{\l})^2\NormLISPH{\DmFunc_{}^{\l}}{2}{\IndexSetAll}^2+\sum_{\l=0}^\k\DtT{\l}\NormHzInvTISPH{\DmFunc_{}^{\l}}{\l}{\IndexSetAll}^2},\qquad
		\k=0,1,\dots,\TStepMax-1. 
		\label{cor:stab_semi:ineq}
	\end{equation}
\end{cor}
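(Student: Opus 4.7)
The plan is to reduce Corollary \ref{cor:stab_improved_semi} to Theorem \ref{thm:stab_semi} by observing that the constructions \eqref{def:mod_pv} and \eqref{setting:DtT} force the semi-regularity and time step conditions to hold automatically at every time level. Then the one-step estimate that drives the proof of Theorem \ref{thm:stab_semi} can be rerun verbatim with the parameters $\Dt$ and $\Pv{\j}$ replaced by $\DtT{\k}$ and $\PvT{\j}{\k}$.

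First I would verify that the linear system \eqref{def:mod_pv} implies
\begin{equation}
\sum_{\j=1}^\N \PvT{\j}{\k} \BrANd{\PtT{\i}{\k}-\PtT{\j}{\k}} \BrANd{\whd(|\PtT{\i}{\k}-\PtT{\j}{\k}|)} = \Dim, \qquad \i=1,2,\dots,\N,
\end{equation}
so that \eqref{cond:semi-regular} holds at each $\k$ with $\ConstSemiReg = 0$. Similarly \eqref{setting:DtT} is the equality case of \eqref{cond:time-step}, so the time step condition holds at each $\k$ with the prescribed $\ConstTStepCond$. Combined with the assumed $\h$-connectivity and the hypothesized existence of strictly positive $\PvT{\i}{\k}$, this places us in the setting of Lemmas \ref{lem:propaties_disc_norm:01}--\ref{lem:ineq:|Lap|} at every individual time step.

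Next I would rerun the proof of Theorem \ref{thm:stab_semi} verbatim under the substitution $\Dt \mapsto \DtT{\k}$ and $\Pv{\j} \mapsto \PvT{\j}{\k}$. Each lemma invoked there depends only on the discrete parameters at the current step $\k$, so the substitutions are harmless. Specializing to $\ConstSemiReg = 0$ collapses the one-step bound \eqref{prf:thm:stab_semi:Vel_k+1:01} to
\begin{equation}
\NormLISPH{\VelApp{}{\k+1}}{2}{\IndexSetAll}^2 \leq \NormLISPH{\VelApp{}{\k}}{2}{\IndexSetAll}^2 + \gconst (\DtT{\k})^2 \NormLISPH{\DmFunc_{}^{\k}}{2}{\IndexSetAll}^2 + \gconst \DtT{\k} \NormHzInvTISPH{\DmFunc_{}^{\k}}{\k}{\IndexSetAll}^2,
\end{equation}
with $\gconst$ depending only on $\KVisc$ and $\ConstTStepCond$. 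Telescoping from $\l = 0$ to $\l = \k$ and using $\sum_{\l=0}^{\k} \DtT{\l} \leq \TimeMax$ then yields \eqref{cor:stab_semi:ineq} directly; notably, the Gr\"onwall step used in Theorem \ref{thm:stab_semi} is not even needed once $\ConstSemiReg = 0$ removes the $\NormLISPH{\VelApp{}{\l}}{2}{\IndexSetAll}^2$ term from the right side.

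The argument contains no genuine obstacle: the analytical content is entirely encapsulated in Theorem \ref{thm:stab_semi}, and the role of \eqref{def:mod_pv} and \eqref{setting:DtT} is purely to enforce its hypotheses by construction. The only point deserving care is checking that each estimate in the proof of Theorem \ref{thm:stab_semi} is local in time, so that the substitution $\Dt \mapsto \DtT{\k}$ and $\Pv{\j} \mapsto \PvT{\j}{\k}$ produces no cross-step inconsistencies; a direct inspection of Lemmas \ref{lem:inequarity_disc_norm} and \ref{lem:ineq:|Lap|} confirms this.
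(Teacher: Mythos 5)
Your proposal is correct and matches the paper's (essentially unstated) argument: the paper likewise derives Corollary \ref{cor:stab_improved_semi} by noting that \eqref{def:mod_pv} and \eqref{setting:DtT} enforce the semi-regularity and time step conditions by construction, and then invokes Theorem \ref{thm:stab_semi} with $\Dt$ and $\Pv{\j}$ replaced by $\DtT{\k}$ and $\PvT{\j}{\k}$. Your additional observation that $\ConstSemiReg=0$ eliminates the $\NormLISPH{\VelApp{}{\l}}{2}{\IndexSetAll}$ term and renders the Gr\"onwall step unnecessary is a valid minor simplification, not a different route.
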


\section{Concluding remarks}
\label{sec:Concluding_remarks}
We have analyzed the unique solvability and stability of the implicit and semi-implicit schemes in the incompressible smoothed particle hydrodynamics (ISPH) method. 
Three key conditions were introduced for our analysis, the three conditions on discrete parameters, which are the $\h$-connectivity, semi-regularity, and time step conditions. 
With $\h$-connectivity, the unique solvability of the implicit and semi-implicit schemes was obtained in two- and three-dimensional space. 
With the $\h$-connectivity and semi-regularity conditions, the stability of velocity for the implicit scheme was established in two-dimensional space. 
Moreover, with the addition of the time step condition, the stability of velocity for the semi-implicit scheme was established in two-dimensional space. 
Thanks to these results, the conditions on discrete parameters sufficient for obtaining stable computing with the ISPH method are clarified. 

As an application of these results, we introduced modified implicit and semi-implicit schemes by redefining discrete parameters. 
By introducing the modified particle volume set, which imposes an additional constraint condition at each step, the modified implicit scheme becomes stable without the semi-regularity condition. 
Moreover, by introducing the variable time step, which is updated according to the particle distribution and particle volume set, the modified semi-implicit scheme becomes stable without the semi-regularity and time step conditions. 

As future work, we will extend the stability to that in three-dimensional space and with boundary conditions such as Neumann boundary conditions in the pressure Poisson equation. 
Moreover, we will investigate convergence for the ISPH method mathematically. 

\section*{Acknowledgments}
This work was supported by JSPS KAKENHI Grant Number 17K17585, JSPS A3 Foresight Program, and ``Joint Usage/Research Center for Interdisciplinary Large-scale Information Infrastructures'' in Japan (Project ID: jh180060-NAH).

\appendix
\section{Mathematical tools}
\label{appendix:math_tool}

\subsection*{Cauchy--Schwarz inequality}
Let $M\in\dN$. 
For all $a_i, b_i\in\dR~(i=1,2,\dots,M)$, the following, called the Cauchy--Schwarz inequality, holds:
\begin{equation}
	\sum_{i=1}^Ma_i b_i \leq \BrS{\sum_{i=1}^M a_i^2}^{1/2}\BrS{\sum_{i=1}^M b_i^2}^{1/2}. 
	\label{Cauchy_Schwarz_ineq_disc}
\end{equation}

\subsection*{Gr\"onwall's inequality}
Let $M\in\dN$. 
Assume that $a_i, b_i>0~(i=0,1,\dots,M)$, $c>0$ satisfy the inequality
\begin{equation}
	a_k \leq a_0 + c + \sum_{j=0}^{k-1} a_j b_j, \qquad k=1,2,\dots,M. 
\end{equation}
Then the following, called Gr\"onwall's inequality, holds: 
\begin{equation}
	a_k \leq (a_0+c)\prod_{j=0}^{k-1} (1+b_j) \leq (a_0+c) \exp\BrS{\sum_{j=0}^{k-1} b_j}, \qquad k=1,2,\dots,M. 
	\label{Gronwall_ineq_disc}
\end{equation}

\bibliographystyle{spmpsci}      

\end{document}